\newtheorem{Theo}{Theorem}[section]
\newtheorem{Def}[Theo]{Definition}
\newtheorem{Ex}[Theo]{Example}
\newtheorem{Lem}[Theo]{Lemma}
\newtheorem{Prop}[Theo]{Proposition}
\newtheorem{Cor}[Theo]{Corollary}
\newtheorem{Con}[Theo]{Conjecture}
\newtheorem{Rem}[Theo]{Remark}
\newcommand{\xymat}[1]{\begin{align*}\xymatrix{ #1}\end{align*}}
\title[Homology of Algebras over the Spectral Lie Operad]{On the Odd Primary Homology of Free Algebras over the Spectral Lie Operad}
\author{Jens Jakob Kjaer}
\date{\today}
\address{Department of Mathematics,
255 Hurley, Notre Dame, IN 46556}
\begin{document}
\begin{abstract}
The derivatives of the identity functor on spaces in Goodwillie calculus form an operad in spectra. Antolin-Camarena computed the mod 2 homology of free algebras over this operad for 1-connected spectra. In this present paper we carry out similar computations for mod $p$ homology for odd primes $p$, also for non-connected spectra.
\end{abstract}

\thanks{The author thanks Mark Behrens for much help, as well as many personal conversations. Further the communication with the referee was most helpful in getting a better article written, and the author is very thankful for this. A frightening amount of typos and grammatical errors where removed with the help of Bridget Schreiner. The author was partially supported by NSF DMS 1209387}
\subjclass[2000]{55S12, 55P65, 55Q15}
 \keywords{ Dyer-Lashof Operations, Goodwillie Calculus, Homology}

\maketitle

\section*{Introduction}
It is known from \cite{CLM76} that if a spectrum is an algebra over an $E_\infty$ operad then it admits certain operations on its mod $p$ homology, the Dyer-Lashof operations. These have proven to be of great use in many computations. A different operad that is of interest to many topologists is the spectral Lie operad, $\partial_*$, and it has been shown in \cite{Beh11} that algebras over this operad admit certain Dyer-Lashof-like operations, in this paper called Lie power operations, on their mod 2 homology, as well as a Lie bracket. 

A strategy for computing unstable homotopy groups is through the Goodwillie tower (as defined in \cite{Goo03}) for the identity functor. This tower gives a spectral sequence inputting $\pi_*D_n(X)$ and converging to $\pi_*X$, for any 1-connected pointed space $X$, where $D_n(X)$ are some collection of infinite loop spaces, $D_n(X)=\Omega^\infty \mathbb{D}_n(X)$, whose homotopy we hope to be able to compute. The fact that the spaces $D_n(X)$ appear as $0$th spaces of spectra implies that we should be able to bring the full weight of stable computations to bear, hopefully allowing us to compute unstable homotopy groups from stable homotopy groups. An example of this program was carried out in  \cite{Beh12}. 

The spectra $\mathbb{D}_n(X)$ have the form $(\partial_n \wedge X^{\wedge n})_{h \Sigma_n}$, where $\partial_n$ is the $n$'th spectrum of the spectral Lie operad, since $\partial_n$ is in fact the $n$'th Goodwillie derivative of the identity functor. In \cite{AM99} the homology of $\mathbb{D}_n(S^l)$ was computed, and a basis was given in terms of unadmissable sequences of Dyer-Lashof operations. It was shown in \cite{Beh11} that these unadmissable sequences of Dyer-Lashof-like operations at the prime $2$ are in a very precise manner the same as the Lie power operations from above.

A different perspective on the symmetric sequence $\partial_*$ is given in \cite{Chi05}, where the operadic structure was constructed. There it is shown that $\partial_*$ is the Koszul dual of the commutative cooperad, which in fact gives its name of spectral Lie operad, as the Koszul dual of the algebraic commutative operad is the shifted algebraic Lie operad \cite{GiKa94}. These facts suggests that $\partial_*$ is an essential operad, and hence worthy of more study.  

In this paper we will study the odd primary homology of algebras over the spectral Lie operad. The definition of the Lie power operations and Lie bracket studied in this paper for the prime $p=2$ was first given by Behrens in \cite{Beh11}, and the computation of the homology of free algebras of $1$-connected spectra was carried out by Antolin-Camarena in \cite{Cam15} (rewritten and expanded as a paper \cite{Cam16}). Much of what follows here mirrors the strategies found in these references. A strengthening to computing  homology of free algebras of any spectrum is found in \cite{Bra17}.

We will discuss in section 1 the operadic structure of $\partial_*$, as well as give two different cell structures to the spectra $\partial_n$. The first is the arboreal cell structure, and comes from the operadic structure in \cite{Chi05}. The second is the simplicial cell structure and was leveraged in the homology computations of \cite{AM99}. We will also briefly identify the first differential in a certain spectral sequence that is of great use to us. 
In section 2 we will define the Lie power operations and Lie bracket, prove that the bracket in fact does deserve its name, as well as the fact that all power operations bracket trivially with everything. 
We will then move on, in section 3, to give a basis for the free algebra on an odd sphere using the computation in \cite{AM99}, and will then leverage this basis, as well as the study of the EHP sequence in \cite{Beh12}, to do the same for even spheres in positive dimensions, and then use \cite{Bra17} to extend it to negative dimensions as well. Thereafter we will be ready to state and prove our main result in section 4. The result is a basis for the homology of the free algebra over the spectral Lie operad on any spectrum. We will finish by stating a conjecture: that the relations the Lie power operations satisfy are the mixed Adem relations. This was shown in \cite{Beh12} for $p=2$, unfortunately the techniques used there to determine the relations are not known to generalize to odd primes.

\section{Preliminaries}
Throughout this article, let $p$ be a fixed odd prime. All homology is computed with $\mathbb{Z}/p$ coefficients. We will need a good symmetric monoidal category of spectra, so take spectra to mean the category of symmetric spectra as developed in \cite{HSS00}, and let $H\mathbb{F}_p$ be the Eilenberg-Maclane spectra for the finite field with $p$ elements, $\mathbb{F}_p$. 
\subsection{The Spectral Lie Operad}
We will let $\{\partial_*\}$ denote the symmetric sequence of spectra of the derivatives of the identity $Top_*\to Top_*$. Recall from \cite{Chi05} that $\{\partial_*\}$ has an operadic structure with structure maps in spectra:
\begin{align*}
\xi_{n,k_1,\ldots , k_n}:\partial_n\wedge \partial_{k_1} \wedge \ldots \wedge \partial_{k_n} \to \partial_{k_1+\ldots+k_n}.
\end{align*} 
By abuse of notation we will drop the indices on all of these maps throughout. As it is Koszul dual to the commutative cooperad in spectra, we will call $\{\partial_*\}$ the spectral Lie operad.

Our main target of computation is the homology of the following
\begin{Def}
Let $X$ be any spectrum. Then the free algebra over the spectral Lie operad generated by $X$, $\mathbb{P}(X)$, is given by $\bigvee_n\mathbb{D}_n(X)$, where $\mathbb{D}_n(X):=(\partial_n\wedge X^{\wedge n})_{h\Sigma_n}$ where $\Sigma_n$ permutes the copies of $X^{\wedge n}$.
\end{Def}
Clearly the free algebra over the spectral Lie operad is in fact an algebra over $\partial_*$.

\subsubsection*{Cell Structures and Trees}
As the goal is to do computations in homology, a good understanding of the cell structure of $\partial_n$ is needed. We will employ two different such. The first one, called the arboreal cell structure, comes from Ching's description of the topological operadic bar construction, and is therefore well-behaved with respect to the operadic structure maps. The second, called the simplicial cell structure, comes from the cosimplicial filtration of the operadic bar construction, this has proved valuable in the homology computations carried out by Arone and Mahowald \cite{AM99}, but is not well behaved with the operadic structure. In either case, the cells will be labelled by certain trees.   

When we write a general tree with labels in some finite set $A$, we will always mean a rooted tree with the valence of the root and leaves being 1, and a fixed bijection from $A$ to the set of leaves. We will often suppress the set $A$ from the notation. An internal vertex is any vertex that is not a leaf or the root. We will call it a tree if the valence of any internal vertex is greater than 2. Given a general tree $T$, we will use $E(T)$ to denote the set of edges, and $V(T)$ to denote the set of vertices. Given $u,v\in V(T)$ we say that $v$ is a descendant of $u$ if there is an edge from $u$ to $v$ and $u$ is closer to the root than $v$. 
\begin{Def}
A metric tree is a tree, $T$, together with a map \linebreak$m:E(T)\to [0,1]$ such that if $e_1,\ldots, e_n$ is any path from the root to a leaf then $\Sigma_{i=1}^nm(e_i)=1$.
\end{Def}
Recall that \cite[Def. 4.1]{Chi05} defines pointed topological spaces $\partial^n$, such that the Spanier-Whitehead dual of $\partial^n$ is $\partial_n$. A point of $\partial^n$ is either given by a metric tree with labels $\{1,\ldots , n\}$, or the base point, where we identify a tree with an edge of metric 0 with the tree where we have collapsed this edge, if it is an internal edge or with the basepoint if it is a root or leaf edge. Ching defined the operadic structure in the language of metric trees. Ching's arboreal construction of $\partial^n$ leads to a cell decomposition of $\partial_n$, where each cell is labelled by an isomorphism class of a labelled trees, with labels $\{1,\ldots, n\}$. The dimension of the cell represented by such a tree is given by $-k$ where $k$ is the number of internal vertices. We will call this cell decomposition the arboreal cell decomposition. The operadic structure induces maps
\begin{align*}
\xi_*: C^{CW}_*(\partial_n)\otimes C^{CW}_*(\partial_{k_1})\otimes \ldots \otimes C^{CW}_*(\partial_{k_n})\to C^{CW}_*(\partial_{k_1+\ldots +k_n}) 
\end{align*}
that take $T\otimes T^{(1)}\otimes \ldots \otimes T^{(n)}$ to the cell labelled by the tree obtained by identifying the root edge of $T^{(i)}$ with the edge attached to the leaf of $T$ labelled $i$ (See the example in Figure \ref{fig:op}).

\begin{figure}[h] 
\begin{center}   
   \begin{tikzpicture}[xscale=1,yscale=1]
\draw (0, 2.3) node {\huge $\partial_2$};
\draw (1.5, 2.3) node {\huge $\partial_2$};
\draw (3, 2.3) node {\huge $\partial_3$};
\draw (6, 2.3) node {\huge $\partial_5$};
\draw [->,thick] (3.75, 2.3) -- (4.75 , 2.3);
\draw (4.25, 2.55) node {\Large $\xi$};
\draw (0.75, 2.3) node {\huge $\wedge$};
\draw (2.25, 2.3) node {\huge $\wedge$};
\draw (0, 0) node  {$\bullet$};
\draw (0, 0.5) node  {$\bullet$};
\draw (-0.25, 1) node  {$\bullet$};
\draw (0.25, 1) node  {$\bullet$};
\draw (-0.25, 1.25) node  {$1$};
\draw (0.25, 1.25) node  {$2$};
\draw [-,thick] (0, 0) -- (0, 0.5);
\draw [-,thick,densely dashed] (0, 0.5) -- (0.25 , 1);
\draw [-,thick,densely dotted] (0, 0.5) -- (-0.25 , 1);
\draw (0.75, 0.5) node {$\bigotimes$};
\draw (1.5, 0) node  {$\bullet$};
\draw (1.5, 0.5) node  {$\bullet$};
\draw (1.25, 1) node  {$\bullet$};
\draw (1.75, 1) node  {$\bullet$};
\draw (1.25, 1.25) node  {$1$};
\draw (1.75, 1.25) node  {$2$};
\draw [-,thick,densely dotted] (1.5, 0) -- (1.5, 0.5);
\draw [-,thick] (1.5, 0.5) -- (1.75 , 1);
\draw [-,thick] (1.5, 0.5) -- (1.25 , 1);
\draw (2.25, 0.5) node {$\bigotimes$};
\draw (3, 0) node  {$\bullet$};
\draw (3, 0.5) node  {$\bullet$};
\draw (2.6, 1) node  {$\bullet$};
\draw (3.4, 1) node  {$\bullet$};
\draw (3, 1) node  {$\bullet$};
\draw (2.6, 1.25) node  {$1$};
\draw (3.4, 1.25) node  {$3$};
\draw (3, 1.25) node  {$2$};
\draw [-,thick,densely dashed] (3, 0) -- (3, 0.5);
\draw [-,thick] (3, 0.5) -- (2.6 , 1);
\draw [-,thick] (3, 0.5) -- (3 , 1);
\draw [-,thick] (3, 0.5) -- (3.4 , 1);
\draw [|->,thick] (3.75, 0.5) -- (4.75 , 0.5);
\draw (4.25, 0.75) node {\Large $\xi_*$};
\draw (6, -0.25) node {$\bullet$};
\draw (6, 0.25) node {$\bullet$};
\draw (5.25, 0.75) node {$\bullet$};
\draw (6.75, 0.75) node {$\bullet$};
\draw (5, 1.25) node {$\bullet$};
\draw (5.5, 1.25) node {$\bullet$}; 
\draw (5, 1.5) node {$1$};
\draw (5.5, 1.5) node {$2$};
\draw (6.75, 1.25) node {$\bullet$};
\draw (6.25, 1.25) node {$\bullet$};
\draw (7.25, 1.25) node {$\bullet$};
\draw (6.75, 1.5) node {$4$};
\draw (6.25, 1.5) node {$3$};
\draw (7.25, 1.5) node {$5$};
\draw [-,thick]  (6, -0.25) --  (6, 0.25);
\draw [-,thick,densely dotted]  (6, 0.25) -- (5.25, 0.75);
\draw [-,thick,densely dashed]  (6, 0.25) -- (6.75, 0.75);
\draw [-,thick]  (5.25, 0.75) -- (5, 1.25);
\draw [-,thick]  (5.25, 0.75) -- (5.5, 1.25);
\draw [-,thick]  (6.75, 0.75) -- (6.75, 1.25);
\draw [-,thick]  (6.75, 0.75) -- (6.25, 1.25);
\draw [-,thick]  (6.75, 0.75) -- (7.25, 1.25);
\end{tikzpicture} 
  \end{center}
\caption{\label{fig:op} An example of the operadic structure on arboreal cells. The dotted and dashed edges indicate the gluing.}
\end{figure}

\begin{Def}
A levelled tree is a general tree, $T$, together with a function \linebreak$l:V(T)\to [0,1]$ such that the root goes to 0, the leaves to 1, and if $v$ is a descendant of $u$ then $l(u)< l(v)$. The number of levels of a levelled tree is $|l(V(T))|-2$, where $|-|$ denotes cardinality.  

We say that two levelled trees $T$ and $T'$ are isomorphic if there is an isomorphism of the trees, and a strictly increasing function $\phi:[0,1]\to [0,1]$ making the obvious relations hold.
\end{Def}
There is a different cell structure on $\partial_n$ where each cell is given by an isomorphism class of levelled trees with labels $\{1,\ldots, n\}$, as given in \cite{Beh12}. The cell represented by the class of a levelled tree, $T$, is in dimension $-k$ where $k$ is the number of levels, and the gluing data comes from collapsing levels. We will call this cell-decomposition the simplicial cell decomposition.

Note that in both cell decompositions, $\partial_n$ has exactly one $(-1)$-cell, for $n>1$. The cell is given by the tree having exactly one internal vertex, we call this tree $T_n$.

\subsubsection*{G-Cells}
Recall that a naive $G$-CW complex is a naive $G$-spectrum $X$ whose underlying spectrum is a CW-spectrum, such that if $X^{(k)}$ is the $k$-th skeleta, then $X^{(k)}$ is a naive $G$-spectrum, and $X^{(k-1)}\hookrightarrow X^{(k)}$ is a $G$-map with $G$-cofiber $\bigvee_i (G/H_i)_+ \wedge S^{k}$, for some collection of subgroups $\{H_i\}$ of $G$. For each $i$ we say that $X$ has a $(k,H_i)$-cell. 

Goodwillie showed in \cite{Goo03} that $\partial_n$ is a naive $\Sigma_n$ spectrum. Clearly the $\Sigma_n$ action of trees with leaves labelled by $\{1,\ldots , n\}$ induces an action on the cells of $\partial_n$, making it into a naive $\Sigma_n$-CW complex. If $G$ is any subgroup of $\Sigma_n$, then clearly $\partial_n$ is a $G$-CW spectrum. If $\Sigma_T\subset \Sigma_n$ is the symmetry group of the tree $T$ labelled by $\{1,\ldots , n\}$, let $\Sigma_T'=G\cap \Sigma_T$. Then $\partial_n$ has a $(k,\Sigma_T')$-cell, where $k$ is the number of internal vertices of $T$. Note that if $T'$ is any tree such that there is $g\in G$ such that $g\cdot T'=T$, then $T'$ represents the same $G$-cell as $T$. This discussion works equally well with arboreal or simplicial cells.

\subsubsection*{A Small Forest of Examples}
\begin{Def}
Let $T^{j,i}$ for $1\leq j \leq i$ denote the labelled tree, with labels $\{1,\ldots, i\}$, depicted below
\begin{align*}
\begin{tikzpicture}[xscale=1,yscale=1]
\draw (0, 0) node [fill=white] {$\bullet$};
\draw (0, 1) node [fill=white] {$\bullet$};
\draw (1 , 2) node [fill=white] {$\bullet$};
\draw (-1 , 2) node [fill=white] {$\bullet$};
\draw (-1 , 2.5) node [fill=white] {$j$};
\draw (0.0 , 3.5) node [fill=white] {$1$};
\draw (0.5 , 3.5) node [fill=white] {$\cdots$};
\draw (1 , 3.5) node [fill=white] {$\hat{j}$};
\draw (1.5 , 3.5) node [fill=white] {$\cdots$};
\draw (2 , 3.5) node [fill=white] {$i$};
\draw (0.0 , 3) node [fill=white] {$\bullet$};
\draw (0.5 , 3) node [fill=white] {$\cdots$};
\draw (1.5 , 3) node [fill=white] {$\cdots$};
\draw (2 , 3) node [fill=white] {$\bullet$};
\draw [-,thick] (0, 0) -- (0, 1);
\draw [-,thick] (0, 1) -- (-1, 2);
\draw [-,thick] (0, 1) -- (1, 2);
\draw [-,thick] (1, 2) -- (0, 3);
\draw [-,thick] (1, 2) -- (2, 3);
\draw (1 , 2.5) node  {$\cdots$};
\end{tikzpicture}
\end{align*}  
where $\hat{j}$ denotes that the label $j$ is omitted.

In the arboreal structure this represents a $-2$ cell.
\end{Def} 
\begin{Def}
Let $T_{n,k}$ be a levelled tree with labels $\{1,\ldots, n^k\}$, where each internal vertex has $n$-descendants, and $k$-levels.

In the simplicial cell structure this represents a $(-k)$-cell.
\end{Def}
Note that $T_{n,1}=T_n$.
 \begin{Ex}
 The tree $T_{3,2}$ is:
\begin{align*}
\begin{tikzpicture}[xscale=1,yscale=1]
\draw (0, 0) node  {$\bullet$};
\draw (0, 1) node {$\bullet$};
\draw (0 , 2) node  {$\bullet$};
\draw (-3 , 2) node {$\bullet$};
\draw (3 , 2) node  {$\bullet$};
\draw (-4 , 3) node  {$\bullet$};
\draw (-3 , 3) node  {$\bullet$};
\draw (-2 , 3) node  {$\bullet$};
\draw (-1 , 3) node  {$\bullet$};
\draw (0 , 3) node  {$\bullet$};
\draw (1 , 3) node  {$\bullet$};
\draw (4 , 3) node  {$\bullet$};
\draw (3 , 3) node  {$\bullet$};
\draw (2 , 3) node  {$\bullet$};
\draw (-4 , 3.5) node  {$1$};
\draw (-3 , 3.5) node  {$2$};
\draw (-2 , 3.5) node  {$3$};
\draw (-1 , 3.5) node  {$4$};
\draw (0 , 3.5) node  {$5$};
\draw (1 , 3.5) node  {$6$};
\draw (4 , 3.5) node  {$9$};
\draw (3 , 3.5) node  {$8$};
\draw (2 , 3.5) node  {$7$};
\draw [-,thick] (0, 0) -- (0, 1);
\draw [-,thick] (0, 1) -- (-3, 2);
\draw [-,thick] (0, 1) -- (3, 2);
\draw [-,thick] (0, 1) -- (0, 2);
\draw [-,thick] (-3, 2) -- (-4, 3);
\draw [-,thick] (-3, 2) -- (-3, 3);
\draw [-,thick] (-3, 2) -- (-2, 3);
\draw [-,thick] (0, 2) -- (-1, 3);
\draw [-,thick] (0, 2) -- (0, 3);
\draw [-,thick] (0, 2) -- (1, 3);
\draw [-,thick] (3, 2) -- (2, 3);
\draw [-,thick] (3, 2) -- (3, 3);
\draw [-,thick] (3, 2) -- (4, 3);
\draw [-,dashed] (-5, 2) -- (5, 2);
\draw [-,dashed] (-5, 1) -- (5, 1);
\draw [-,dashed] (-5, 0) -- (5, 0);
\draw [-,dashed] (-5, 3) -- (5, 3);
\end{tikzpicture}
\end{align*}   
where the dashed horizontal lines indicate levels.
 \end{Ex}
 
 \subsection{Computational Methods}
 In this paper we will often  need to calculate the homology groups $H_*(\partial_n\wedge_{hG} X)$ for some $G\subset \Sigma_n$ where $X$ is a $G$-spectrum. We will use the following spectral sequence.
\begin{Lem}\label{SS}
There is a spectral sequence coming from the arboreal cell decompostion of $\partial_n$ with
\begin{align*}
E^1_{k,*}:=\bigoplus_{T}H_*(\Sigma^{-k}X_{h\Sigma_T'})\Rightarrow H_*(\partial_n\wedge_{hG} X)
\end{align*}
where the direct sum is over $G$ equivalence classes of trees $T$ with $k$ internal vertices, and $\Sigma_T':=G\cap \Sigma_T$, and differentials $d_r:E^r_{k,n}\to E^r_{k-r,*-1}$.
\end{Lem}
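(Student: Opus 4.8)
The plan is to obtain the spectral sequence as the one associated to the filtration of $\partial_n\wedge_{hG}X$ induced by the skeletal filtration of $\partial_n$ in the arboreal cell structure, and then to identify the $E^1$-page using the description of the arboreal $G$-cells recalled above.

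First I would set up the filtration. Since $\partial_n$ is coconnective it is convenient to index its skeleta by $k\ge 0$: let $\mathrm{sk}_k\partial_n\subset\partial_n$ be the subspectrum built from all arboreal cells labelled by trees with at most $k$ internal vertices. By the discussion of $G$-cells above this is a finite increasing filtration of naive $G$-spectra
\begin{align*}
\ast=\mathrm{sk}_0\partial_n\subset\mathrm{sk}_1\partial_n\subset\cdots\subset\mathrm{sk}_{n-1}\partial_n=\partial_n,
\end{align*}
in which each inclusion $\mathrm{sk}_{k-1}\partial_n\hookrightarrow\mathrm{sk}_k\partial_n$ has $G$-cofiber $\bigvee_T(G/\Sigma_T')_+\wedge S^{-k}$, the wedge running over $G$-equivalence classes of trees $T$ with exactly $k$ internal vertices and $\Sigma_T'=G\cap\Sigma_T$.

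Next I would push this through $(-\wedge X)_{hG}$, formed as usual in symmetric spectra on suitably cofibrant representatives so that it computes the derived functor. Both $X\wedge-$ and $(-)_{hG}=(EG_+\wedge-)/G$ are homotopy colimits, so they preserve cofiber sequences, wedges, and the finite colimit expressing $\partial_n$ as the union of its skeleta; hence we get a finite filtration of $\partial_n\wedge_{hG}X$ whose $k$-th subquotient is $\bigl(\bigvee_T(G/\Sigma_T')_+\wedge S^{-k}\wedge X\bigr)_{hG}$. Applying the untwisting equivalence $((G/H)_+\wedge Z)_{hG}\simeq Z_{hH}$ (for a $G$-spectrum $Z$ with $H$ acting by restriction) to $Z=S^{-k}\wedge X$ identifies this subquotient with $\bigvee_T\Sigma^{-k}X_{h\Sigma_T'}$.

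Finally I would pass to $\mathbb{Z}/p$-homology: the long exact sequences attached to the cofiber sequences $\mathrm{sk}_{k-1}\to\mathrm{sk}_k\to\mathrm{gr}_k$ assemble into an exact couple and hence a spectral sequence, and because $H_*(-)$ carries wedges to direct sums its $E^1$-term is $\bigoplus_T H_*(\Sigma^{-k}X_{h\Sigma_T'})$. The connecting maps lower homological degree by one and filtration degree by one, so $d_r\colon E^r_{k,*}\to E^r_{k-r,*-1}$; and since the filtration is finite the spectral sequence converges strongly to $H_*(\partial_n\wedge_{hG}X)$. The only step that is more than formal bookkeeping is knowing that $\partial_n$ really is a naive $G$-CW spectrum with the asserted arboreal cells — so that the skeletal filtration and its $G$-cofibers exist as claimed — which is exactly what \cite{Goo03} and \cite{Chi05}, together with the $G$-cell discussion above, provide; granting that, everything else is the standard spectral sequence of a filtered spectrum.
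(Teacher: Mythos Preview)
Your argument is correct and is precisely the paper's approach, which the paper indicates in a single sentence: take the $G$-CW (arboreal) skeletal filtration of $\partial_n$, push it through $(-\wedge X)_{hG}$, and apply homology. The only caveat is that since a $(-k)$-cell attaches to the $(-k-1)$-skeleton, the genuine subspectra are the $\partial_n^{(-k)}$ consisting of cells with \emph{at least} $k$ internal vertices (so your $\mathrm{sk}_k$ are quotients rather than subcomplexes), but this is a cosmetic reindexing that does not affect the argument.
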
  
The spectral sequence arising from the $G$-CW filtration of $\partial_n$ induces a filtration of $\partial_n\wedge_{hG} X$, and we apply homology to this filtration to obtain the spectral sequence.

   The following lemma allows us to identify the $d_1$-differentials: 
 \begin{Lem}\label{d_1=Tr}
Let $G$ be a subgroup of $\Sigma_n$, and $X$ be a $G$-spectrum. In the spectral sequence computing $H_*(\partial_{n}\wedge_{hG} X)$, from the $G$-equivariant arboreal cell decomposition of $\partial_n$, we have that $d_1$ is a sum of transfers and trivial maps.
\end{Lem}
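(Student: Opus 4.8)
The strategy, mirroring the prime~$2$ computation in \cite{Cam16}, is to identify $d_1$ with (the image under a natural functor of) the cellular boundary map of the arboreal $G$-CW structure on $\partial_n$, and then to read that boundary map off from Ching's construction of $\partial_n$ as a Spanier--Whitehead dual. First I would unravel the definition of $d_1$: exactly as for Lemma~\ref{SS}, smashing the arboreal skeletal filtration of $\partial_n$ with $X$ and applying $(-)_{hG}$ produces a filtration of $\partial_n\wedge_{hG}X$ whose $k$-th subquotient is $\bigvee_T\big((G/\Sigma_T')_+\wedge S^{-k}\wedge X\big)_{hG}\simeq\bigvee_T\Sigma^{-k}X_{h\Sigma_T'}$, the wedge running over $G$-classes of $k$-vertex trees; taking homology gives the spectral sequence, and $d_1$ is the map induced in homology by the composite of one cofibre-sequence connecting map with the collapse onto the next subquotient. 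Thus each entry of $d_1$ is obtained by applying $\Phi(-):=H_*\big((-\wedge X)_{hG}\big)$ to the corresponding entry of the arboreal cellular boundary $d^{\mathrm{cw}}$ of $\partial_n$, i.e.\ to a $G$-equivariant map between induced cells of the form $(G/\Sigma_T')_+\wedge S^{-k}$. So it remains to (i) describe $d^{\mathrm{cw}}$ on the level of these induced $G$-cells, and (ii) record how $\Phi$ treats the morphisms that occur.

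For (i) I would use \cite{Chi05}. There $\partial_n$ is the Spanier--Whitehead dual of the finite pointed $G$-space $\partial^n$, whose CW structure has one $k$-cell per $G$-class of $k$-vertex tree and whose cellular attaching maps are assembled from the internal-edge collapse maps $\bar T\rightsquigarrow\bar T/e$. Collapsing an edge can only enlarge the symmetry group of a tree, so $\Sigma_{\bar T}\subseteq\Sigma_{\bar T/e}$, and on the level of $G$-orbits of cells such a collapse is the orbit projection $G/\Sigma_{\bar T}'\twoheadrightarrow G/\Sigma_{\bar T/e}'$, possibly precomposed with the transfer onto the stabilizer of $e$ in $\Sigma_{\bar T}'$ and carrying a sign; in particular the boundary of $\partial^n$ is, cell by cell, a $\mathbb Z$-linear combination of finite-$G$-set projections and transfers and involves no further stable classes, being a combinatorial incidence operation. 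Spanier--Whitehead dualizing and using the self-duality $D\big((G/H)_+\big)\simeq(G/H)_+$ of a finite $G$-set, under which the dual of an orbit projection $G/L\to G/H$ (for $L\subseteq H$) is the transfer $(G/H)_+\to(G/L)_+$, it follows that $d^{\mathrm{cw}}$ carries the cell of a tree $T$ to a signed sum, over the trees $\bar T$ obtained from $T$ by expanding one internal vertex into an internal edge, of transfers $(G/\Sigma_T')_+\to(G/\Sigma_{\bar T}')_+$ (note $\Sigma_{\bar T}'\subseteq\Sigma_T'$), together with possibly some zero entries.

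For (ii) the functor $\Phi$ preserves cofibre sequences, sends $(G/H)_+\wedge S^d$ to $\Sigma^dX_{hH}$, and — by naturality of the transfer under $(-\wedge X)_{hG}$ — sends the transfer $(G/K)_+\to(G/H)_+$ with $H\subseteq K$ to the homology transfer $H_*(X_{hK})\to H_*(X_{hH})$ of the covering $BH\to BK$. Combining (i) and (ii), every entry of $d_1$ is a unit multiple of such a transfer or else the zero map, which is exactly the assertion. The main obstacle is step (i): justifying rigorously, inside Ching's bar-construction model, that the arboreal attaching maps contain no stable classes beyond projections and transfers, and keeping track of precisely which isotropy subgroup and which sign each term carries both in $\partial^n$ and after dualizing — the stabilizer of the collapsed edge can shrink the relevant subgroup strictly below $\Sigma_{\bar T}'$, and cancellation among the signs of the various expansions is what makes some entries trivial. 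For the qualitative statement proved here only the membership of each entry in $\{\text{transfers}\}\cup\{0\}$ matters, so this finer bookkeeping can be deferred to the later sections where specific differentials are identified.
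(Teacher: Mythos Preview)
Your argument is correct and reaches the same conclusion, but the paper takes a more direct route that avoids Spanier--Whitehead duality altogether. Rather than analyzing the cellular boundary of the finite space $\partial^n$ and then dualizing projections into transfers, the paper works immediately with the skeletal filtration of $\partial_n$: fixing a pair $T$, $T'$ with $T=T'/e$ (so $\Sigma_{T'}'\subset\Sigma_T'$), it first observes that the \emph{non-equivariant} attaching map between these cells has degree~$\pm 1$, so the $G$-equivariant map of induced cells $(G/\Sigma_T')_+\wedge X\to(G/\Sigma_{T'}')_+\wedge X$ is, summand by summand, the identity permuted by the $G$-action. Applying $(-)_{hG}$ and the equivalence $((G/H)_+\wedge X)_{hG}\simeq X_{hH}$, the component $\bar d_1:X_{h\Sigma_T'}\to X_{h\Sigma_{T'}'}$ is then identified with $(-)_{h\Sigma_T'}$ applied to the diagonal $X\to(\Sigma_T'/\Sigma_{T'}')_+\wedge X$, which is precisely the definition of the transfer.

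Your dualize-then-apply-$\Phi$ strategy is more conceptual and makes the appearance of transfers structurally inevitable (the dual of an orbit projection is a transfer), at the cost of invoking the self-duality of finite $G$-sets and having to argue, as you flag, that no exotic stable classes enter the attaching maps of $\partial^n$. The paper's argument sidesteps that last concern by never leaving the level of explicit $G$-maps between wedges of copies of $X$, but it is correspondingly less transparent about \emph{why} transfers are the only thing that can appear. Either method suffices for the qualitative statement; for the specific differentials computed later (e.g.\ in Lemma~\ref{Lem:Syl}) what is actually used is the hands-on identification of each component as the transfer for a particular subgroup inclusion, which both approaches deliver.
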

\begin{proof}
Recall that $d_1$ is given by
\begin{align*}
 \bigoplus_{T} H_*(S^{-s}\wedge X_{h\Sigma_{T}'}) \stackrel{\delta}{\to}  H_{*-1}(\partial_n^{(-s-1)} \wedge_{hG} X) \to \bigoplus_{T'} H_{*-1}(S^{-s-1}\wedge X_{h\Sigma_{T'}'})
\end{align*}
where the first direct sum is over $(-s)$-$G$-cells of $\partial_n$, labelled by orbits of trees $T$ under the $G$ action, and the second is over $(-s-1)$ cells, labelled by orbits of trees $T'$ under the $G$ action, and $\partial_n^{(-s-1)}$ is the $(-s-1)$-skeleton of $\partial_n$. Then $\delta$ comes from the cofiber sequence 
\xymat{\partial_n^{(-s-1)}\ar[r] & \partial_n^{-s} \ar[r] & \bigvee_T S^{-s} \ar[r]^\delta & \Sigma \partial_n^{(-s-1)},}
 and $\Sigma_T',\Sigma_{T'}'\subset G$ are as above.
Let $T$ be a tree obtained from $T'$ be collapsing an edge, and assume that $T'$ has $s+1$ internal vertices (and hence $T$ has $s$). Note that this further implies that $\Sigma_{T'}\subset \Sigma_T$. Therefore our goal is now showing that the restriction
\begin{align*}
  H_*(S^{-s}\wedge X_{h\Sigma_{T}'})  \stackrel{\bar{d_1}}{\to}  H_{*-1}(S^{-s-1}\wedge X_{h\Sigma_{T'}'})
\end{align*}
is in fact the transfer map. If we run the spectral sequence computing $H_*(\partial_n\wedge X)$ from the cells of $\partial_n$ then the analogous map $H_*(S^{-s}\wedge X) \to  H_{*-1}(S^{-s-1}\wedge X)$, coming from the attaching map between $T$ and $T'$, would be the identity. Note for all $\sigma\in G$, the cell represented by $\sigma \cdot T$ is obtained from $\sigma \cdot T'$ by collapsing an edge. Note that we can think of $\bar{d_1}$ as induced by applying $(-)_{hG}$ to
\begin{align*}
\bigvee_{\sigma\in G/\Sigma_T'} X \to \bigvee_{\sigma \in G/\Sigma_{T'}'}X
\end{align*}
which is equivalent to applying $(-)_{h\Sigma_T'}$ to
\begin{align*}
 X \to \bigvee_{\sigma \in \Sigma_T'/\Sigma_{T'}'}X,
\end{align*}
which is exactly the definition of the transfer.
\end{proof}

\section{Homology Operations}
In this section we will define and prove certain relations for the Lie power operations, as well as a Lie bracket. Let $L$ be an algebra over the operad $\partial_*$, with structure maps $\xi_n: \partial_n\wedge L^{\wedge n} \to L$. We will again drop all indices, and thus use $\xi$ for structure maps for both the operad itself and its algebras.
\subsection{Power Operations}
We wish to define Lie power operations 
\begin{align*}
\overline{\beta^\epsilon Q^i}: H_*(L)\to H_{*+2(p-1)i-\epsilon-1}(L).
\end{align*}

Recall from \cite[Thm I.1.1]{CLM76} that for any spectrum $X$, $i\in \mathbb{N}_0$, and $\epsilon$ equal to $0$ or $1$ we have maps $q_{i,\epsilon}: H_*(X)\to H_{*+2(p-1)i-\epsilon}(X^{\wedge p}_{h\Sigma_p})$, taking $x\mapsto \beta^\epsilon Q^i(x)$. 
This map comes from a study of $C_*^{CW}(E\Sigma_p)$, which has certain elements $e_k$ of degree $k$, such that when $k=(p-1)j-\epsilon$ and $x\in C_*(X)$ is a cycle then so is $e_k\otimes x^{\otimes p}\in C_*^{CW}(E\Sigma_p)\otimes_{\Sigma_p} C_*(X)^{\otimes p}$, when $j$ has the same parity as $|x|$. This cycle represents the class $\beta^\epsilon Q^{\frac{j+|x|}{2}}(x)\in H_{*}(X^{\wedge p}_{h\Sigma_p})$. Ideally, one would wish to recreate this construction by picking out explicit cycles in the chain complex for $\partial_p\wedge_{h\Sigma_p}L^{\wedge p}$, unfortunately this turns out not to be feasible, and we will instead attack the problem somewhat indirectly.

Given an element $x\in H_*(L)$ we wish to define an element $T_p\otimes \beta^\epsilon Q^{i}(x)$ in the homology group $H_{*+2(p-1)i-\epsilon-1}(\partial_p\wedge_{h\Sigma_p}L^{\wedge p})$, given by the cell represented by the tree $T_p$ in either cell description of $\partial_p$.
\begin{Lem} \label{Lem:Syl}
Let $\iota\in H_*(S^j)$ be the generator. The element $\sigma^{-1}\beta^\epsilon Q^i(\iota)$ in $ H_*(\Sigma^{-1}(S^j)^{\wedge p}_{h\Sigma_{T_p}})$ survives the spectral sequence from Lemma \ref{SS} used to compute $H_*(\partial_p\wedge_{h\Sigma_p}(S^j)^{\wedge p})$.
\end{Lem}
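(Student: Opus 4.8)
\emph{The plan} is to apply the spectral sequence of Lemma~\ref{SS} with $G=\Sigma_p$ and $X=(S^j)^{\wedge p}$, so that it converges to $H_*(\partial_p\wedge_{h\Sigma_p}(S^j)^{\wedge p})$, to locate $\sigma^{-1}\beta^\epsilon Q^i(\iota)$ on its $E^1$-page, to observe that it cannot be a boundary, and then to check that every differential supported on it vanishes. First I would identify the relevant $E^1$-summand. Among all trees with $p$ leaves, $T_p$ is the unique one with a single internal vertex, and every permutation of its leaves is an automorphism, so $\Sigma_p$ acts trivially on the corresponding $(-1)$-cell and the $E^1$-summand indexed by $T_p$ is $H_*(\Sigma^{-1}(S^j)^{\wedge p}_{h\Sigma_p})$; thus $\sigma^{-1}\beta^\epsilon Q^i(\iota)$ is the desuspension of the Dyer--Lashof class $\beta^\epsilon Q^i(\iota)\in H_*((S^j)^{\wedge p}_{h\Sigma_p})$ sitting there. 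If this class is zero there is nothing to prove, so assume $\beta^\epsilon Q^i(\iota)\neq 0$; the usual instability for Dyer--Lashof operations \cite{CLM76} then gives $2i-\epsilon\geq j$, whence $|\beta^\epsilon Q^i(\iota)|=j+2i(p-1)-\epsilon\geq pj$, with equality exactly when $\epsilon=0$ and $2i=j$ (forcing $j$ even), i.e. when $\beta^\epsilon Q^i(\iota)=Q^{j/2}(\iota)=\iota^{\wedge p}$ is the bottom class. Since $T_p$ indexes the unique cell of top dimension $-1$, and the differentials of the spectral sequence strictly lower cell dimension, equivalently raise the number of internal vertices (cf. the proof of Lemma~\ref{d_1=Tr}), no differential has image in the $T_p$-summand, so $\sigma^{-1}\beta^\epsilon Q^i(\iota)$ is never a boundary; it remains to show it is a permanent cycle.

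\emph{The key input} is that for every tree $T$ with $p$ leaves other than $T_p$, the symmetry group $\Sigma_T\subseteq\Sigma_p$ has order prime to $p$: an order-$p$ element of $\Sigma_T$ is a $p$-cycle on the leaves coming from a tree automorphism, and such a cyclic symmetry preserves the partition of the leaves by the branches below the root's child, forcing all leaves to hang off a single internal vertex, i.e. $T=T_p$. Hence, for $T\neq T_p$,
\[
H_*\left((S^j)^{\wedge p}_{h\Sigma_T};\mathbb{F}_p\right)\;\cong\;\left(H_*((S^j)^{\wedge p};\mathbb{F}_p)\right)_{\Sigma_T}
\]
is concentrated in the single homological degree $pj$, so the $E^1$-summand indexed by any tree with $k\geq 2$ internal vertices is concentrated, after the $\Sigma^{-k}$ shift, in degree $pj-k$.

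\emph{Killing the differentials.} A differential $d_r$ out of $\sigma^{-1}\beta^\epsilon Q^i(\iota)$ lands in the summand indexed by trees with $r+1$ internal vertices, which by the previous step is concentrated in degree $pj-(r+1)$; since $d_r$ lowers homological degree by one, it can be nonzero only if $|\sigma^{-1}\beta^\epsilon Q^i(\iota)|=pj-r$. As $|\sigma^{-1}\beta^\epsilon Q^i(\iota)|\geq pj-1$, this forces $r=1$ and $|\sigma^{-1}\beta^\epsilon Q^i(\iota)|=pj-1$, i.e. $\sigma^{-1}\beta^\epsilon Q^i(\iota)=\sigma^{-1}\iota^{\wedge p}$ with $j$ even. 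In that remaining case, by Lemma~\ref{d_1=Tr} the differential $d_1$ out of this class is a sum of transfers $\mathrm{tr}\colon(S^j)^{\wedge p}_{h\Sigma_p}\to(S^j)^{\wedge p}_{hH}$, one for each $2$-internal-vertex tree obtained from $T_p$ by splitting off $m$ leaves, $2\leq m\leq p-1$, so $H=\Sigma_{p-m}\times\Sigma_m$. Composing such a transfer with the covering projection $(S^j)^{\wedge p}_{hH}\to(S^j)^{\wedge p}_{h\Sigma_p}$ is multiplication by $[\Sigma_p:H]=\binom{p}{m}\equiv 0\pmod p$; since for $j$ even this projection is an isomorphism in degree $pj$ (both sides being $\mathbb{F}_p$, the coinvariants of the trivial module $H_{pj}((S^j)^{\wedge p})$), we conclude $\mathrm{tr}(\iota^{\wedge p})=0$, hence $d_1(\sigma^{-1}\iota^{\wedge p})=0$ as well. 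Therefore $\sigma^{-1}\beta^\epsilon Q^i(\iota)$ is a permanent cycle, and being not a boundary, it survives.

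\emph{The hard part} will be exactly the exceptional case $\epsilon=0$, $2i=j$ (so $j$ even, $\beta^\epsilon Q^i(\iota)=\iota^{\wedge p}$): there the dimension count alone does not suffice, and one must genuinely identify $d_1$ with a transfer via Lemma~\ref{d_1=Tr} and compute that it annihilates the bottom class because $\binom{p}{m}$ is divisible by $p$. Everything else is bookkeeping with the arboreal cell structure together with the fact that $T_p$ is the only tree whose symmetry group has $p$-torsion.
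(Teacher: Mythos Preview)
Your proof is correct and follows essentially the same approach as the paper's: both hinge on the observation that $T_p$ is the only $p$-leaf tree whose symmetry group has order divisible by $p$, so every other $E^1$-summand is concentrated in a single degree, after which a degree count kills $d_r$ for $r\geq 2$ and the transfer identification of Lemma~\ref{d_1=Tr} together with $p\mid[\Sigma_p:\Sigma_T]$ kills $d_1$. The only organizational difference is that the paper splits into the cases $j$ odd (where the sign representation makes all non-$T_p$ summands vanish outright) and $j$ even, whereas you treat both parities uniformly via the instability bound $|\beta^\epsilon Q^i(\iota)|\geq pj$; your extra observation that the class is never a boundary is correct but not needed for the statement.
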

\begin{proof}
Assume that $T$ 
is a tree representing a $-k\neq -1$ cell of $\partial_p$. Then $T\neq T_p$, and clearly $p\nmid |\Sigma_T|$. We can compute 
$H_*(\Sigma^{-k}(S^j)^{\wedge p}_{h\Sigma_T})$ by the usual homotopy orbits spectral sequence with $E^2$ page $E^2_{s,t}=H_s(\Sigma_T;H_t((S^j)^{\wedge p}))$. 

Now if $j$ is odd then $H_*((S^j)^{\wedge p})$ is the sign representation and hence $H_*(\Sigma^{-k}(S^j)^{\wedge p}_{h\Sigma_T})$ is trivial
by \cite[Cor. 6.5.9]{Wei95}. Therefore the spectral sequence from  Lemma \ref{SS} collapses and the result holds.

If $j$ is even, then $H_*((S^j)^{\wedge p})$ is the trivial representation and hence we see that $H_*(\Sigma^{-k}(S^j)^{\wedge p}_{h\Sigma_T})$ is concentrated in degree $jp-k$. 
If $T$ represents a $(-2)$-cell then by Lemma \ref{d_1=Tr} we know that 
\begin{align*}
d_1:H_*\Sigma^{-1}(S^j)^{\wedge p}_{h\Sigma_{T_p}}\to H_{*-1}\Sigma^{-2}(S^j)^{\wedge p}_{h\Sigma_{T}}
\end{align*} is induced by the transfer coming from $\Sigma_T\subset \Sigma_{T_p}=\Sigma_p$. Since $p$ divides $[\Sigma_p:\Sigma_T]$ we know that the transfer is trivial, since the inclusion, 
\begin{align*}
H_*(\Sigma_T)\to H_*(\Sigma_p),
\end{align*}
 composed with the transfer induces the map multiplication by $[\Sigma_T:\Sigma_p]$, and we know the inclusion is non trivial. 
If $T$ represents a $(-k-1)$-cell, then  by studying the spectral sequence we see by induction that 
\begin{align*}
d_k:H_*\Sigma^{-1}(S^j)^{\wedge p}_{h\Sigma_{T_p}}\to E^k_{k+1,*-1}.
\end{align*}
 Here the target is concentrated in degree $jp-k-2$, since $H_{*-1}\Sigma^{-k-1}(S^j)^{\wedge p}_{h\Sigma_{T}}$ is, but clearly the source is concentrated in higher degrees, and hence the map is trivial, and therefore we are done.
\end{proof}

Given $x\in H_i(L)$, 
we can represent it by a map $S^i\to H\mathbb{F}_p\wedge L$, which corresponds to a map out of the free $H\mathbb{F}_p$-module $\Sigma^iH\mathbb{F}_p$, which by abuse of notation we are also going to call $x$, so $x:\Sigma^iH\mathbb{F}_p\to H\mathbb{F}_p\wedge L$. This gives us a map 
\begin{align*}
x^{\otimes p}:(\Sigma^iH\mathbb{F}_p)^{\otimes_{H\mathbb{F}_p}p}\to (H\mathbb{F}_p\wedge L)^{\otimes_{H\mathbb{F}_p}p}
\end{align*}
by smashing with $\partial_p$, and taking homotopy orbits we get a map 
\begin{align*}
\tilde{x}:H_*(\partial_p\wedge_{h\Sigma_p} S^{pi})\to H_*(\partial_p\wedge_{h\Sigma_p} L^{\wedge p}).
\end{align*}
 We can now define an element $T_p\otimes \beta^\epsilon Q^i (x)$ in $ H_*(\partial_p\wedge_{h\Sigma_p} L^{\wedge p})$ by $\tilde{x}(\sigma^{-1}\beta^\epsilon Q^i(\iota))$.
\begin{Def}
For $x\in H_*(L)$, and $\xi: \partial_p\wedge_{h\Sigma_p} L^{\wedge p}\to L$ define: 
\begin{align*}
\overline{\beta^\epsilon Q^i}(x):=\xi_*(T_p\otimes \beta^\epsilon Q^{i}(x))\in H_{*+2(p-1)i-\epsilon-1}(L).
\end{align*}
\end{Def}

It is easy to see that if $ 2n-1\leq |x|$, then both $\overline{\beta Q^i}(x)$ and $\overline{Q^j}(x)$ are trivial for $i,j<n$. Since the same is true of $\beta Q^i \iota$ and $Q^j \iota$ in $H_*(S^{k p}_{h\Sigma_p})$, where $k=|x|$. Furthermore the quotient on to the top cell, $T_p$, $\partial_p\to S^{-1}$ induces a map $H_*(\partial_p\wedge_{h\Sigma_p} X^{\wedge p})\to H_*(\Sigma^{-1} X^{\wedge p}_{h\Sigma_p})$, which maps $T_p\otimes \beta^\epsilon Q^i (x)$ to $\sigma^{-1}\beta^\epsilon Q^i x$, where $\sigma^{-1}$ refers to the desuspension.

\subsection{The Bracket}
We can also define a Lie bracket on the algebra by the map $\xi: \partial_2\wedge L \wedge L \to L$
\begin{Def}
For $x\in H_i(L)$, and $y\in H_j(L)$, define $[x,y]\in H_{i+j-1}(L)$ by $[x,y]:=\xi_*(T_2 \otimes x \otimes y)$. 
\end{Def} 
\begin{Rem}
Note this construction works equally well on stable homotopy groups but studying these, and their possible connection with the Whitehead bracket, is beyond the scope of this paper. 
\end{Rem}
\begin{Prop}\label{LieBrac}
The bracket satisfies the following relations for $x,y,z\in H_*(L)$:
\begin{itemize}
\item $[x,y]=(-1)^{|x||y|}[y,x]$ (Graded Commutativity)
\item $(-1)^{|x||z|}[x,[y,z]] + (-1)^{|y||z|}[y,[z,x]]+(-1)^{|z||y|}[z,[x,y]]=0$ (The Graded Jacobi Identity)
\end{itemize}
\end{Prop}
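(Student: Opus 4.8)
The bracket is defined by $[x,y]=\xi_*(T_2\otimes x\otimes y)$, where $\xi:\partial_2\wedge L\wedge L\to L$ and $T_2\in H_{-1}(\partial_2)$ is the unique $(-1)$-cell. Both relations should be deduced entirely from two inputs: (i) the behaviour of the class $T_2$ under the $\Sigma_2$-action on $\partial_2$, for commutativity; and (ii) the operadic relation $\xi\circ(\mathrm{id}\wedge\xi)=\xi\circ(\xi\wedge\mathrm{id})$ together with an identification of the relevant classes in $H_*(\partial_3)$, for the Jacobi identity.  So I would first set up, for any spectrum $X$, the map $H_*(\partial_2\wedge X^{\wedge 2})\to H_*(L)$ obtained by smashing the universal classes with representatives of $x,y$ and applying $\xi_*$, so that everything reduces to a computation in the homology of the operad itself.

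\emph{Graded commutativity.}  The transposition $\tau\in\Sigma_2$ acts on $\partial_2$; since $\partial_2\simeq S^{-1}$ and the single $(-1)$-cell $T_2$ is preserved as a set by the $\Sigma_2$-action, one checks that $\tau$ acts on $H_{-1}(\partial_2)=\mathbb{Z}/p$ by the sign $-1$ (this is already implicit in Lemma \ref{Lem:Syl}, where the sign representation appears).  The structure map $\xi$ is $\Sigma_2$-equivariant, where $\Sigma_2$ acts on $L\wedge L$ by swapping factors (introducing the Koszul sign $(-1)^{|x||y|}$ on homology) and on $\partial_2$ as above.  Chasing $\tau\cdot(T_2\otimes x\otimes y)$ through $\xi_*$ in the two possible ways then gives $[x,y]=(-1)\cdot(-1)^{|x||y|}[y,x]$; I need to be careful that the desuspension shift $\partial_2\simeq S^{-1}$ contributes the extra sign that makes this come out to $[x,y]=(-1)^{|x||y|}[y,x]$ rather than with a spurious minus sign.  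This is the kind of sign bookkeeping that is routine but must be done honestly.

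\emph{Graded Jacobi.}  Consider the composite $\partial_2\wedge\partial_2\wedge L^{\wedge 3}\to\partial_3\wedge L^{\wedge 3}\to L$, using the operadic composition $\xi_*:H_*(\partial_2)\otimes H_*(\partial_2)\to H_*(\partial_3)$ on arboreal cells described in Section 1.  Grafting $T_2$ onto one leaf of $T_2$ produces a tree in $\partial_3$ with two internal vertices — a $(-2)$-cell — and there are exactly three such trees (up to relabelling), one for each way of bracketing, i.e. $[x,[y,z]]$, $[y,[z,x]]$, $[z,[x,y]]$ correspond to the three binary trees on three leaves.  The iterated bracket $[x,[y,z]]$ equals $\xi_*$ applied to the image of the corresponding grafted class.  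The Jacobi identity will follow once I show that the appropriate signed sum of these three $(-2)$-cells is a boundary in the arboreal chain complex $C^{CW}_*(\partial_3)$, i.e.\ that it is killed in $H_{-2}(\partial_3)$; equivalently, that $H_{-2}(\partial_3)$ is small enough (the known computation $\partial_3\simeq \Sigma^{-2}(S^1\vee S^1)$ with a free $\Sigma_3$-action, or the identification via Lie algebra $\mathrm{Lie}(3)$) forces the signed sum of the three brackets, with signs $(-1)^{|x||z|},(-1)^{|y||z|},(-1)^{|z||y|}$, to vanish after applying $\xi_*$.  Concretely I would identify $C^{CW}_*(\partial_3)$ in degrees $-1,-2$ (one $(-1)$-cell $T_3$, three $(-2)$-cells) with differential, observe the cycles in degree $-2$ form a rank-$2$ module, and match the classical Jacobi relation in $\mathrm{Lie}(3)$ — which holds in the homology of the algebraic Lie operad and transfers here because $\partial_*$ is the spectral Lie operad and its homology realizes (a shift of) the Lie operad.

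\textbf{Main obstacle.}  The hard part is the Jacobi identity, specifically pinning down that the signed sum of the three grafted $(-2)$-cells is exactly the image of the boundary of the $(-1)$-cell $T_3$ (up to a unit), \emph{with the correct Koszul signs} attached by the $L^{\wedge 3}$-permutations, so that $\xi_*$ annihilates it.  This requires (a) a careful description of the attaching map $T_3\leadsto\{$three $(-2)$-cells$\}$ in the arboreal CW structure, including orientations, and (b) tracking how the $\Sigma_3$-action and the graded-commutativity signs from the previous part interact with that attaching map.  The commutativity relation, by contrast, is essentially a one-line equivariance argument once the sign of $\tau$ on $H_{-1}(\partial_2)$ is fixed.
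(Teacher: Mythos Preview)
Your Jacobi argument is essentially the paper's: the three iterated brackets are carried by the three $(-2)$-cells $T^{1,3},T^{2,3},T^{3,3}$ of $\partial_3$ in the arboreal structure, and the chain-level relation $d(T_3)=T^{1,3}+T^{2,3}+T^{3,3}$, tensored with $x\otimes y\otimes z$ and pushed through $\xi_*$, yields the signed sum. The paper organizes this via the algebra-over-an-operad square (so that the three permuted brackets visibly arise from the $\Sigma_3$-equivariance of $\xi:\partial_3\wedge L^{\wedge 3}\to L$), but the core mechanism is the same boundary computation you describe.

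Your commutativity argument, however, rests on a wrong sign. The $\Sigma_2$-action on $\partial_2\simeq S^{-1}$ is \emph{trivial}, not the sign representation: in Ching's model the unique metric tree with two leaves is literally fixed by the swap of labels, so $\tau\cdot T_2=T_2$ in $H_{-1}(\partial_2)$. The appearance of the sign representation in Lemma~\ref{Lem:Syl} concerns $\Sigma_p$ acting on $H_*((S^j)^{\wedge p})$ for odd $j$, which is a statement about the \emph{inputs}, not about $\partial_p$; it gives no information about the action on $T_2$. Consequently your equivariance chase gives $[x,y]=(+1)\cdot(-1)^{|x||y|}[y,x]$ directly, with no need for a compensating ``desuspension sign'' --- and indeed there is no mechanism that would produce one (the degree of $T_2$ never moves past anything in the swap). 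If you had insisted on $\tau\cdot T_2=-T_2$ you would have obtained $[x,y]=-(-1)^{|x||y|}[y,x]$, which is the shifted-Lie convention the paper explicitly notes it does \emph{not} satisfy. So the approach is right but the input is wrong; once you correct the action to be trivial, the argument is the paper's one-line proof.
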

\begin{Rem}
Note that our Lie bracket does not satisfy the usual conventions for either the graded Lie bracket (due to dimension) or the shifted graded Lie bracket (due to sign conventions, see for example \cite{KM95} for a different sign convention). We will later see in Corollary \ref{[x,[x,x]]} that for $p=3$ the usual axiom $[x,[x,x]]=0$ still holds.
\end{Rem}
\begin{proof}[Proof of Prop. \ref{LieBrac}]
We clearly see that $[x,y]=(-1)^{|x||y|}[y,x]$ since $\partial_2\simeq S^{-1}$ with the trivial $\Sigma_2$ action. 
Since $L$ is an algebra over  $\partial_*$, the following diagram commutes:
\xymat{\partial_2 \wedge L \wedge (\partial_2 \wedge L \wedge L) \ar[r]^{1\wedge \xi} \ar[d]^{\simeq} & \partial_2 \wedge L \wedge L \ar[r]^\xi & L \\
\partial_2 \wedge (\partial_1 \wedge L) \wedge (\partial_2 \wedge L \wedge L) \ar[d]^\sigma & &\\
\partial_2 \wedge \partial_1 \wedge \partial_2 \wedge L \wedge L \wedge L \ar[d] & & \\
\partial_3 \wedge L \wedge L \wedge L \ar@/_2pc/[rruuu]_\xi 
}
Pick $x,y,z\in H_*(L)$ and apply homology to the diagram. If we start with $T_2\otimes x \otimes (T_2 \otimes y \otimes z)$ in the upper left corner, we get $[x,[y,z]]$ in the upper right corner. In the lower left corner we get by the operadic structure $T^{1,3}\otimes x \otimes y \otimes z$. Using the permutation action we see that $(-1)^{|x||y|+|x||z|}T^{2,3}\otimes x\otimes y \otimes z$ maps to $[y,[z,x]]$ since the diagram above is $\Sigma_3$-equivariant, and in the same manner $(-1)^{|z||y|+|x||z|}T^{2,3}\otimes x\otimes y \otimes z$. Now, by abuse of notation, let the trees $T_3$ and $T^{i,3}$ $i=1,2,3$ be the cells of $\partial_3$ in the arboreal cell decomposition. Then the chain differential is $d(T_3)=T^{1,3}+T^{2,3}+T^{3,3}$, so if we let $x,y,z$ also denote cycles in the singular chain complex for $L$ then the following boundary $d((-1)^{|x||z|}T_3\otimes x \otimes y \otimes z)$ in the chain complex computing $H_*(\partial_3\wedge L \wedge L \wedge L)$ enforces the relation:
\begin{align*}
(-1)^{|x||z|}[x,[y,z]] + (-1)^{|y||z|}[y,[z,x]]+(-1)^{|z||y|}[z,[x,y]]=0. 
\end{align*}
\end{proof}
We have further the following interaction of the bracket and the Lie power operations:
\begin{Prop}
For all $x,y\in H_*L$, $k\in \mathbb{N}_0$, and $\epsilon=0,1$, we have $[x,\overline{\beta^\epsilon Q^k}y]=0$. 
\end{Prop}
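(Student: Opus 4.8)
The plan is to mimic the argument for the Jacobi identity: express the composite that computes $[x,\overline{\beta^\epsilon Q^k}y]$ as a map out of $\partial_3$ smashed with three copies of $L$ (really out of $\partial_2\wedge\partial_1\wedge\partial_p$ after a suitable grouping), and then use that the relevant cell of the domain is a boundary. Concretely, since $\overline{\beta^\epsilon Q^k}y=\xi_*(T_p\otimes\beta^\epsilon Q^k y)$, the bracket $[x,\overline{\beta^\epsilon Q^k}y]$ is obtained by applying homology to the associativity diagram
\xymat{\partial_2\wedge L\wedge(\partial_p\wedge_{h\Sigma_p} L^{\wedge p}) \ar[r]^-{1\wedge\xi}\ar[d]^{\simeq} & \partial_2\wedge L\wedge L \ar[r]^-\xi & L\\
\partial_2\wedge(\partial_1\wedge L)\wedge(\partial_p\wedge_{h\Sigma_p}L^{\wedge p})\ar[d] & & \\
(\partial_2\wedge\partial_1\wedge\partial_p)\wedge_{hG}L^{\wedge(p+1)}\ar[d] & & \\
\partial_{p+1}\wedge_{hG}L^{\wedge(p+1)}\ar@/_2pc/[rruuu]_\xi & &
}
where $G=\Sigma_1\times\Sigma_p\subset\Sigma_{p+1}$ acts on the last $p$ factors, and on $\partial_2\wedge\partial_1\wedge\partial_p$ through the operad structure. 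Feeding $T_2\otimes x\otimes(T_p\otimes\beta^\epsilon Q^k y)$ through the top row gives $[x,\overline{\beta^\epsilon Q^k}y]$; feeding it down the left column gives $\xi_*(S\otimes x\otimes \beta^\epsilon Q^k y)$, where $S\in\partial_{p+1}$ is the arboreal cell obtained by grafting $T_p$ onto the $\Sigma_p$-leaf of $T_2$, i.e.\ the tree with two internal vertices: a root vertex of valence $2$ with one leaf labelled by $x$, and a single child vertex of valence $p$ carrying the remaining $p$ leaves.

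The key point is then to show that $S\otimes x\otimes\beta^\epsilon Q^k y$ is killed in $H_*\big(\partial_{p+1}\wedge_{hG}L^{\wedge(p+1)}\big)$, or more precisely that its image under $\xi_*$ vanishes. I would run the arboreal spectral sequence of Lemma~\ref{SS} for $H_*(\partial_{p+1}\wedge_{hG}(S^i\wedge S^{pj})^{\wedge})$ — really for the relevant bigraded piece where $x$ has the degree of the generator of $S^i$ and $\beta^\epsilon Q^k y$ corresponds to $\sigma^{-1}\beta^\epsilon Q^k\iota$ on $S^{pj}$ — exactly as in the proof of Lemma~\ref{Lem:Syl}. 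The tree $S$ represents a $(-2)$-cell whose symmetry group $\Sigma_S'=G\cap\Sigma_S$ still contains a full $\Sigma_p$ on the bottom $p$ leaves, so $p\mid|\Sigma_S'|$. The $d_1$ differential into the $(-2)$-stratum from the unique $(-1)$-cell $T_{p+1}$ is, by Lemma~\ref{d_1=Tr}, a transfer; and by the same Sylow/transfer argument as in Lemma~\ref{Lem:Syl} one checks that the class $S\otimes x\otimes\beta^\epsilon Q^k y$ is in the image of this $d_1$ (the transfer from $\Sigma_S'$ up to $\Sigma_{T_{p+1}}'$ hits it because $p$ divides the relevant index precisely when it should), hence is a boundary and dies. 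After applying $\xi_*$ the class is zero in $H_*(L)$. In chain-level language, just as in the Jacobi proof, $S$ together with the other $(-2)$-cells forms $d$ of the $(-1)$-cell $T_{p+1}$ up to the cells whose symmetry groups have order prime to $p$ (on which $\beta^\epsilon Q^k\iota$ already vanishes), so $d(T_{p+1}\otimes x\otimes\beta^\epsilon Q^k y)$ exhibits the required relation.

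The naturality step is routine: one pulls everything back along $x\colon\Sigma^iH\mathbb{F}_p\to H\mathbb{F}_p\wedge L$ and $y\colon\Sigma^jH\mathbb{F}_p\to H\mathbb{F}_p\wedge L$ exactly as in the construction of $\overline{\beta^\epsilon Q^k}$, reducing to the universal case of spheres where Lemma~\ref{Lem:Syl}-type reasoning applies. The main obstacle is the middle step: correctly bookkeeping the symmetry group $\Sigma_S'$ of the grafted tree $S$ inside $G=\Sigma_1\times\Sigma_p$ and verifying that $S\otimes x\otimes\beta^\epsilon Q^k y$ genuinely lies in the image of the transfer $d_1$ (rather than merely having a target that vanishes for degree reasons) — this requires identifying which $(-2)$-cells of $\partial_{p+1}$ under the $G$-action are hit by $d(T_{p+1})$ and matching the transfer on homotopy orbits with the chain-level boundary, much as in the proof of Lemma~\ref{d_1=Tr}. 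Once that identification is in hand, the vanishing is forced and applying $\xi_*$ finishes the proof.
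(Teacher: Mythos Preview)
Your approach is exactly the paper's: pass through the associativity diagram to land in $H_*\big(\partial_{p+1}\wedge_{hG}(S^i\wedge (S^j)^{\wedge p})\big)$ with $G=1\times\Sigma_p$, identify the class with the $(-2)$-cell $S=T^{1,p+1}$, and run the arboreal spectral sequence with $d_1$ given by transfers. The reduction step and the naturality step are fine.

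The gap is in the mechanism you give for why the class on $S$ is a $d_1$-boundary. You write that it is hit ``because $p$ divides the relevant index precisely when it should,'' invoking the Sylow/transfer argument of Lemma~\ref{Lem:Syl}. But that lemma uses $p\mid[\Sigma_p:\Sigma_T]$ to conclude a transfer \emph{vanishes}, hence a class \emph{survives}. Here you need the opposite conclusion, and it comes from the opposite index computation: since $\Sigma_{T_{p+1}}=\Sigma_{p+1}$ and $\Sigma_S=1\times\Sigma_p$, intersecting with $G=1\times\Sigma_p$ gives $\Sigma_{T_{p+1}}'=\Sigma_S'=1\times\Sigma_p$, so the index is $1$ and the transfer is the \emph{identity}. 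That is the reason the $d_1$-component landing on the $S$-summand is an isomorphism, and hence why the class is a boundary. You have the two roles swapped: the ``$p$ divides the index'' argument is what you need for the \emph{other} $(-2)$-cells (those with stabiliser of order prime to $p$), to see that the components of $d_1$ landing there are zero --- this is what the paper means by ``the other cells have trivial homology except in one degree, but the map is trivial onto that degree.'' Once you separate these two pieces correctly (identity transfer onto $S$, vanishing transfer onto the rest), the argument goes through and matches the paper's.
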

\begin{proof}
This is the odd primary version of Lemma 6.5 in \cite{Cam15}, and we will start out similar. 

Let $x: \Sigma^iH\mathbb{F}_p\to H\mathbb{F}_p\wedge L$ and $y: \Sigma^jH\mathbb{F}_p\to H\mathbb{F}_p\wedge L$ represent $x$ and $y$. 
By studying the arboreal cell structure we see that:
\begin{align*}
\xi_*: C^{CW}_*(\partial_2\wedge \partial_1 \wedge \partial_p) & \to C^{CW}_*(\partial_{1+p}) \\
T_2\otimes T_1\otimes T_p & \mapsto T^{1,1+p}  
\end{align*}
and hence we get the following commutative $1\times \Sigma_p$ equivariant diagram of 
\xymat{C^{CW}_*(\partial_{2})\otimes C_*(S^i)  \otimes C^{CW}_*(\partial_{p}) \otimes C_*(S^j)^{\otimes p}  \ar[d] \ar[r]& C^{CW}_*(\partial_{2})\otimes C_*(L)  \otimes C^{CW}_*(\partial_{p}) \otimes C_*(L)^{\otimes p} \ar[d]   \\ C^{CW}_*(\partial_{1+p})\otimes C_*(S^i) \otimes C_*(S^j)^{\otimes p}\ar[r] & C^{CW}_*(\partial_{1+p})\otimes C_*(L) \otimes C_*(L)^{\otimes p} \ar[d] \\
& C_*(L)}
If we start with $T_2\otimes x\otimes T_p\otimes y^{\otimes p}$ in the upper left corner, then it is mapped to a class that represents $[x,\overline{\beta^\epsilon Q^k}y]$ in $H_*(L)$. By taking Borel homology, with respect to $1\times \Sigma_p$, everywhere we get that it is enough to show that the image of $T^{1,1+p}\otimes x\otimes y$ in $H_*(\partial_{1+p}\wedge S^i \wedge (S^j)^{\wedge p})_{h1\times\Sigma_p}$ is trivial. Now we are going to diverge from the proof of Lemma 6.5 in \cite{Cam15}, as we are not going to compute the homology groups in their entirety.
We will need the spectral sequence coming from the arboreal $(1\times \Sigma_p)$-cell decomposition of $\partial_{1+p}$ (see Figure \ref{fig:d4} for the example $p=3$, with $T^{1,4}$ in bold), and its $d_1$-differentials.

\begin{figure}[h]
\begin{center}
    \begin{tabular}{ c | c }
   Dimension & Cells \\ \hline
   $-1$ &   \begin{subfigure}[h]{5cm}
   \begin{tikzpicture}[xscale=1,yscale=1]
\draw (0, 0) node  {\small $\bullet$};
\draw (0, 0.3) node  {\small $\bullet$};
\draw (-0.3 , 0.6) node  {\small $\bullet$};
\draw (0.3 , 0.6) node  {\small $\bullet$};
\draw (0.1 , 0.6) node  {\small $\bullet$};
\draw (-0.1 , 0.6) node  {\small $\bullet$};
\draw (-0.3, 0.8) node {\tiny $1$};
\draw (0, -0.25) node {\scriptsize $\Sigma_3$};
\draw [-,semithick] (0,0) -- (0,0.3);
\draw [-,semithick] (0,0.3) -- (-0.3,0.6);
\draw [-,semithick] (0,0.3) -- (0.3,0.6);
\draw [-,semithick] (0,0.3) -- (-0.1,0.6);
\draw [-,semithick] (0,0.3) -- (0.1,0.6);
\end{tikzpicture} 
\end{subfigure} \\ \hline
$-2$ & \begin{subfigure}[h]{5cm}
   \begin{tikzpicture}[xscale=1,yscale=1]
\draw (0, 0) node  {$\bullet$}; 
\draw (0, 0.3) node  {$\bullet$};
\draw (-0.3 , 0.6) node  {$\bullet$};
\draw (0.3 , 0.6) node  {$\bullet$};
\draw (0.3, 0.9 ) node {$\bullet$};
\draw (0.1, 0.9 ) node {$\bullet$};
\draw (0.5, 0.9 ) node {$\bullet$};
\draw (-0.3, 0.8) node {\tiny $1$};
\draw (0, -0.25) node {\scriptsize $\Sigma_3$};
\draw [-,very thick] (0,0) -- (0,0.3);
\draw [-,very thick] (0,0.3) -- (-0.3,0.6);
\draw [-,very thick] (0,0.3) -- (0.3,0.6);
\draw [-,very thick] (0.3, 0.6) -- (0.1,0.9);
\draw [-,very thick] (0.3, 0.6) -- (0.3,0.9);
\draw [-,very thick] (0.3, 0.6) -- (0.5,0.9);
\draw (1.2, 0) node  {\small $\bullet$}; 
\draw (1.2, 0.3) node  {\small $\bullet$};
\draw (0.9 , 0.6) node  {\small $\bullet$};
\draw (1.5 , 0.6) node  {\small $\bullet$};
\draw (1.5, 0.9 ) node {\small $\bullet$};
\draw (1.3, 0.9 ) node {\small $\bullet$};
\draw (1.7, 0.9 ) node {\small $\bullet$};
\draw (1.3, 1.1) node {\tiny $1$};
\draw (1.2, -0.25) node {\scriptsize $\Sigma_2$};
\draw [-,semithick] (1.2,0) -- (1.2,0.3);
\draw [-,semithick] (1.2,0.3) -- (0.9,0.6);
\draw [-,semithick] (1.2,0.3) -- (1.5,0.6);
\draw [-,semithick] (1.5, 0.6) -- (1.3,0.9);
\draw [-,semithick] (1.5, 0.6) -- (1.5,0.9);
\draw [-,semithick] (1.5, 0.6) -- (1.7,0.9);
\draw (2.4, 0) node  {\small $\bullet$}; 
\draw (2.4, 0.3) node  {\small $\bullet$};
\draw (2.1 , 0.6) node  {\small $\bullet$};
\draw (2.3 , 0.6) node  {\small $\bullet$};
\draw (2.7 , 0.6) node  {\small $\bullet$};
\draw (2.6, 0.9 ) node {\small $\bullet$};
\draw (2.8, 0.9 ) node {\small $\bullet$};
\draw (2.1, 0.8) node {\tiny $1$};
\draw (2.4, -0.25) node {\scriptsize $\Sigma_2$};
\draw [-,semithick] (2.4,0) -- (2.4,0.3);
\draw [-,semithick] (2.4,0.3) -- (2.1,0.6);
\draw [-,semithick] (2.4,0.3) -- (2.3,0.6);
\draw [-,semithick] (2.4,0.3) -- (2.7,0.6);
\draw [-,semithick] (2.7, 0.6) -- (2.6,0.9);
\draw [-,semithick] (2.7, 0.6) -- (2.8,0.9);
\draw (3.6, 0) node  {\small $\bullet$}; 
\draw (3.6, 0.3) node  {\small $\bullet$};
\draw (3.3 , 0.6) node  {\small $\bullet$};
\draw (3.5 , 0.6) node  {\small $\bullet$};
\draw (3.9 , 0.6) node  {\small $\bullet$};
\draw (3.8, 0.9 ) node {\small $\bullet$};
\draw (4, 0.9 ) node {\small $\bullet$};
\draw (3.8, 1.1) node {\tiny $1$};
\draw (3.6, -0.25) node {\scriptsize $\Sigma_2$};
\draw [-,semithick] (3.6,0) -- (3.6,0.3);
\draw [-,semithick] (3.6,0.3) -- (3.3,0.6);
\draw [-,semithick] (3.6,0.3) -- (3.5,0.6);
\draw [-,semithick] (3.6,0.3) -- (3.9,0.6);
\draw [-,semithick] (3.9, 0.6) -- (3.8,0.9);
\draw [-,semithick] (3.9, 0.6) -- (4,0.9);
\end{tikzpicture} 
\end{subfigure} \\ \hline
$-3$ & \begin{subfigure}[h]{5cm}
   \begin{tikzpicture}[xscale=1,yscale=1]
\draw (0, 0) node  {\small $\bullet$}; 
\draw (0, 0.3) node  {\small $\bullet$};
\draw (-0.3 , 0.6) node  {\small $\bullet$};
\draw (0.3 , 0.6) node  {\small $\bullet$};
\draw (0, 0.9 ) node {\small $\bullet$};
\draw (0.5, 0.9 ) node {\small $\bullet$};
\draw (0.6, 1.2 ) node {\small $\bullet$};
\draw (0.4, 1.2 ) node {\small $\bullet$};
\draw (-0.3, 0.8) node {\tiny $1$};
\draw (0, -0.25) node {\scriptsize $\Sigma_2$};
\draw [-,semithick] (0,0) -- (0,0.3);
\draw [-,semithick] (0,0.3) -- (-0.3,0.6);
\draw [-,semithick] (0,0.3) -- (0.3,0.6);
\draw [-,semithick] (0.3, 0.6) -- (0,0.9);
\draw [-,semithick] (0.3, 0.6) -- (0.5,0.9);
\draw [-,semithick] (0.5, 0.9) -- (0.6,1.2);
\draw [-,semithick] (0.5, 0.9) -- (0.4,1.2);
\draw (1.2, 0) node  {\small $\bullet$}; 
\draw (1.2, 0.3) node  {\small $\bullet$};
\draw (0.9 , 0.6) node  {\small $\bullet$};
\draw (1.5 , 0.6) node  {\small $\bullet$};
\draw (1.2, 0.9 ) node {\small $\bullet$};
\draw (1.7, 0.9 ) node {\small $\bullet$};
\draw (1.8, 1.2 ) node {\small $\bullet$};
\draw (1.6, 1.2 ) node {\small $\bullet$};
\draw (1.2, 1.1) node {\tiny $1$};
\draw (1.2, -0.25) node {\scriptsize $\Sigma_2$};
\draw [-,semithick] (1.2,0) -- (1.2,0.3);
\draw [-,semithick] (1.2,0.3) -- (0.9,0.6);
\draw [-,semithick] (1.2,0.3) -- (1.5,0.6);
\draw [-,semithick] (1.5, 0.6) -- (1.2,0.9);
\draw [-,semithick] (1.5, 0.6) -- (1.7,0.9);
\draw [-,semithick] (1.7, 0.9) -- (1.8,1.2);
\draw [-,semithick] (1.7, 0.9) -- (1.6,1.2);
\draw (2.4, 0) node  {\small $\bullet$}; 
\draw (2.4, 0.3) node  {\small $\bullet$};
\draw (2.1 , 0.6) node  {\small $\bullet$};
\draw (2.7 , 0.6) node  {\small $\bullet$};
\draw (2.4, 0.9 ) node {\small $\bullet$};
\draw (2.9, 0.9 ) node {\small $\bullet$};
\draw (3, 1.2 ) node {\small $\bullet$};
\draw (2.8, 1.2 ) node {\small $\bullet$};
\draw (2.8, 1.4) node {\tiny $1$};
\draw (2.4, -0.25) node {\scriptsize $1$};
\draw [-,semithick] (2.4,0) -- (2.4,0.3);
\draw [-,semithick] (2.4,0.3) -- (2.1,0.6);
\draw [-,semithick] (2.4,0.3) -- (2.7,0.6);
\draw [-,semithick] (2.7, 0.6) -- (2.4,0.9);
\draw [-,semithick] (2.7, 0.6) -- (2.9,0.9);
\draw [-,semithick] (2.9, 0.9) -- (3,1.2);
\draw [-,semithick] (2.9, 0.9) -- (2.8,1.2);
\draw (3.8, 0) node  {\small $\bullet$}; 
\draw (3.8, 0.3) node  {\small $\bullet$};
\draw (3.5 , 0.6) node  {\small $\bullet$};
\draw (4.1 , 0.6) node  {\small $\bullet$};
\draw (3.4, 0.9 ) node {\small $\bullet$};
\draw (3.6, 0.9 ) node {\small $\bullet$};
\draw (4, 0.9 ) node {\small $\bullet$};
\draw (4.2, 0.9 ) node {\small $\bullet$};
\draw (3.4, 1.1) node {\tiny $1$};
\draw (3.8, -0.25) node {\scriptsize $\Sigma_2$};
\draw [-,semithick] (3.8,0) -- (3.8,0.3);
\draw [-,semithick] (3.8,0.3) -- (3.5,0.6);
\draw [-,semithick] (3.8,0.3) -- (4.1,0.6);
\draw [-,semithick] (4.1, 0.6) -- (4,0.9);
\draw [-,semithick] (4.1, 0.6) -- (4.2,0.9);
\draw [-,semithick] (3.5, 0.6) -- (3.6,0.9);
\draw [-,semithick] (3.5, 0.6) -- (3.4,0.9);
\end{tikzpicture} 
\end{subfigure}
    \end{tabular}
  \end{center}
\caption{The $(1\times \Sigma_3)$-equivariant arboreal cells of $\partial_4$ with symmetry groups. \label{fig:d4}}
\end{figure}

 There is 
exactly one $(-1)$-cell of $\partial_{1+p}$ corresponding to $T_{1+p}$, which has symmetry group $(1\times \Sigma_p)$, as for $-2$-cells, 
there are $T^{1,1+p}$ with symmetry group $1\times \Sigma_p$, and cells with symmetry groups isomorphic to $G$ where $G\subset 1\times\Sigma_p$, 
with $p\nmid |G|$. So 
the $d_1$ in this spectral sequence goes from $H_*(S^{-1}_{h1\times \Sigma_p})\to H_*(\Sigma S^{-2}_{h1\times \Sigma_p})\bigoplus \{\text{other cells}\}$. Its image is entirely contained in $H_*(\Sigma S^{-2}_{h1\times \Sigma_p})$, 
which is representing the cell $T^{1,1+p}$, by an argument similar to the one given in the proof of Lemma \ref{Lem:Syl}, since the other cells have trivial homology except in one degree, but the map is trivial onto that degree. So we need to study the map 
\begin{align*}
H_*(S^{-1}_{h1\times \Sigma_p})\to H_{*-1}(\Sigma S^{-2}_{h1\times \Sigma_p}).
\end{align*}
We know from Lemma \ref{d_1=Tr} that this map is induced by the transfer \linebreak$1\times \Sigma_p\subset 1\times \Sigma_p$, and is therefore an isomorphism. Therefore we can now conclude that the cell $T^{1,1+p}$ does not represent anything non-trivial in the homology of $\partial_{1+p}\wedge_{h1\times \Sigma_p}\wedge S^{i+pj}$, and hence we are done.
\end{proof}

\section{The Computation for the Spheres}
As our goal is a computation of the free algebra over $\partial_*$, the first case will be the algebras generated by a sphere. We will proceed by parity, in the sense that we will  compute $H_*\mathbb{D}_n(X)$ first in the case when $X$ is an odd sphere, and then in the case when $X$ is an even sphere.

Computing $H_*\mathbb{P}(X)$, for $X$ a sphere, will allow us to leverage standard homology tricks, in our main result, Theorem \ref{Main} below, to compute $H_*\mathbb{P}(X)$ for any spectrum, $X$.

\subsection{The Odd Dimensional Case}
 In \cite[Thm. 3.16]{AM99} there is the following result:
\begin{Theo}[Arone-Mahowald, \cite{AM99}] \label{AMbasis}
$H_*(\mathbb{D}_{p^k}(S^{2l+1}))$ is the free graded $\mathbb{F}_p$ vector space on generators
\begin{align*}
\big\{[\beta^{\epsilon_1}Q^{s_1}\wr\cdots \wr \beta^{\epsilon_k}Q^{s_k}\iota] \big|s_k\geq l,\  s_i>ps_{i+1}-\epsilon_{i+1}\, \forall i\big\},
\end{align*}
where as usual $s_i\in \mathbb{N}_0$ and $\epsilon_i=0,1$, and $\iota$ is a generator of $H_{2l+1}S^{2l+1}$. With $\big|[\beta^{\epsilon_1}Q^{s_1}\wr\cdots \wr \beta^{\epsilon_k}Q^{s_k}\iota]\big|=2l+1+2(p-1)(s_1+\ldots+s_k)-\epsilon_1-\ldots -\epsilon_k-k$.

If $i\neq p^k$ for any $k$ then $H_*\mathbb{D}_i(S^{2l+1})=0$.
\end{Theo}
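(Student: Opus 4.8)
The plan is to follow the argument of Arone--Mahowald, so in practice one cites \cite{AM99}; here is how I would reconstruct it.

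\emph{Vanishing off prime powers.} Run the spectral sequence of Lemma~\ref{SS} (or its simplicial analogue) for $X=S^{2l+1}$. Since $2l+1$ is odd, $H_*((S^{2l+1})^{\wedge i})$ is the sign representation of $\Sigma_i$, so for a tree $T$ whose symmetry group $\Sigma_T'$ has order prime to $p$ the term $H_*(\Sigma^{-j}(S^{2l+1})^{\wedge i}_{h\Sigma_T'})$ is at most one-dimensional (sign-coinvariants, concentrated in a single degree), and the differentials between such terms are transfers by Lemma~\ref{d_1=Tr}. A counting/transfer argument in the spirit of the proof of Lemma~\ref{Lem:Syl} then shows that the only classes that can survive sit on the cells indexed by the fully symmetric iterated-wreath-power trees $T_{p,j}$, whose symmetry group is a Sylow $p$-subgroup $C_p\wr\cdots\wr C_p$ of $\Sigma_{p^j}$. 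Such trees exist only when $i$ is a power of $p$; hence $H_*\mathbb{D}_i(S^{2l+1})=0$ for $i\neq p^k$. Equivalently, using $\partial_i\simeq\mathbb{D}|\Pi_i|^\diamond$, the sign-twisted $\Sigma_i$-homotopy orbits of the $\mathbb{F}_p$-homology of the partition complex (the Lie representation) vanish off prime powers.

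\emph{The prime-power case.} Here I would invoke the Arone--Dwyer identification of $\mathbb{D}_{p^k}(S^{2l+1})$, after $p$-completion, with a Steinberg summand $e_{\mathrm{St}}\cdot\Sigma^{-k}\bigl((S^{2l+1})^{\wedge p^k}\bigr)_{h(C_p\wr\cdots\wr C_p)}$ of the $k$-fold iterated extended power, with $e_{\mathrm{St}}\in\mathbb{F}_p[GL_k(\mathbb{F}_p)]$ the Steinberg idempotent acting through the normalizer of the bottom $(C_p)^k$. This splits the computation into two classical pieces. By iterated Dyer--Lashof theory \cite{CLM76}, $H_*\bigl((S^{2l+1})^{\wedge p^k}_{h(C_p\wr\cdots\wr C_p)}\bigr)$ is free on the allowable length-$k$ monomials $\beta^{\epsilon_1}Q^{s_1}\cdots\beta^{\epsilon_k}Q^{s_k}\iota$ (each operation applied in range, which is the source of the constraint $s_k\geq l$). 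Applying $e_{\mathrm{St}}$ cuts this down to the completely inadmissible monomials, those with $s_i>ps_{i+1}-\epsilon_{i+1}$ for all $i$: this is the Dyer--Lashof incarnation of the fact (Mitchell--Priddy, Kuhn) that the Steinberg module is carried by the top cosets, dual to admissibility. Finally one checks degrees: the $-k$ in the degree formula is the sum of the $k$-fold desuspension $\Sigma^{-k}$ and, on the spectral-sequence side, the dimension $-k$ of the cell $T_{p,k}$ carrying the innermost monomial.

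\emph{Main obstacle.} The genuinely hard inputs are (i) the geometric identification of $\mathbb{D}_{p^k}(S^{2l+1})$ with the Steinberg summand of an iterated extended power---the Arone--Dwyer theorem, which uses the fine structure of the partition complex together with a generalized Sullivan-conjecture (Miller) argument---and (ii) the representation theory showing that $e_{\mathrm{St}}$ selects exactly the completely inadmissible monomials. The vanishing statement and the degree bookkeeping are comparatively routine: transfer arguments of the type already appearing in Lemmas~\ref{d_1=Tr} and~\ref{Lem:Syl}, together with standard Dyer--Lashof computations.
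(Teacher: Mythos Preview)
The paper does not prove this theorem: it is stated as a cited result \cite[Thm.~3.16]{AM99}, with no proof environment following it. The only elaboration the paper offers is the remark immediately after the statement, recording that the basis elements arise via the surjection
\[
T_{p,k}:\ \Sigma^{-k}H_*\bigl((S^{2l+1})^{\wedge p^k}_{h\Sigma_p^{\wr k}}\bigr)\longrightarrow H_*\mathbb{D}_{p^k}(S^{2l+1})
\]
coming from the $(-k)$-cell $T_{p,k}$ in the simplicial filtration of $\partial_{p^k}$. That observation is used later (in Proposition~\ref{AMP=LieP}) to identify the Arone--Mahowald generators with iterated Lie power operations, but it is not offered as a proof of Theorem~\ref{AMbasis} itself.

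Your reconstruction is a fair outline of the actual argument in \cite{AM99}, with one caveat. The vanishing off prime powers via the sign representation and transfer bookkeeping is exactly the mechanism Arone and Mahowald use. For the prime-power case, however, invoking the Arone--Dwyer splitting is a slight anachronism: that formulation postdates \cite{AM99}. The original argument works directly with the simplicial filtration of the partition complex, identifies the relevant quotient with (a suspension of) the Tits building for $GL_k(\mathbb{F}_p)$, and then uses the Steinberg module to read off the completely unadmissible basis. Your version and theirs are equivalent in content---both rest on the same Mitchell--Priddy/Kuhn input that the Steinberg idempotent picks out exactly the unadmissible monomials---but if you want to match the cited source you should phrase it in terms of the partition-complex/Tits-building identification rather than the later Arone--Dwyer theorem.
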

We call integer sequences of the form $(\epsilon_1,s_1,\ldots,\epsilon_k,s_k;l)$ completely unadmissable if $s_k\geq l$, and $s_i>ps_{i+1}-\epsilon_{i+1}$ for all $i$.
Recall that this description was found since there is a surjective homomorphism 
\begin{align*}
\Sigma^{-k}H_*((S^{2l+1})^{\wedge p^k}_{h\Sigma_p^{\wr k}})&\to H_*(\mathbb{D}_{p^k}(S^{2l+1})) \\
\beta^{\epsilon_1}Q^{s_1}\wr \cdots \wr \beta^{\epsilon_k}Q^{s_k}\iota & \mapsto [\beta^{\epsilon_1}Q^{s_1}\wr\cdots \wr \beta^{\epsilon_k}Q^{s_k}\iota ]
\end{align*}
coming from the spectral sequence arising from the simplicial filtration of $\partial_*$, see for example \cite[pf of thm. 1.5.1]{Beh12}. They arise since in the $(-k)$-cell labelled by $T_{p,k}$ the element $\beta^{\epsilon_1}Q^{s_1}\wr \cdots \wr \beta^{\epsilon_k}Q^{s_k}\iota$ is always a cycle, and we therefore get a group theoretical map. We are going to name this map $T_{p,k}$, as it comes from the cell represented by the tree $T_{p,k}$ in the simplicial filtration.

 Note that for $X$ a spectrum, we have that $\bigvee_i \mathbb{D}_i(X)$ is an algebra 
over $\partial_*$, and hence we can think of the Lie power operations as being: 
\begin{align*}
\overline{\beta^\epsilon Q^i}:H_*(\mathbb{D}_{p^k}(X))\to H_{*+2(p-1)i-\epsilon-1}(\mathbb{D}_{p^{k+1}}(X)).
\end{align*}
 
\begin{Prop} \label{AMP=LieP}
We have in $H_*(\mathbb{D}_{p^k}(S^{2l+1}))$ that 
\begin{align*}
[\beta^{\epsilon_1}Q^{s_1}\wr\cdots \wr \beta^{\epsilon_k}Q^{s_k}\iota]=\overline{\beta^{\epsilon_1}Q^{s_1}}\cdots \overline{\beta^{\epsilon_k}Q^{s_k}}\iota ,
\end{align*} where $\iota\in H_{2l+1}S^{2l+1}$ is a generator, for any completely unadmissable sequences $(\epsilon_1,s_1,\ldots,\epsilon_k,s_k;l)$.
\end{Prop}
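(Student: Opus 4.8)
The plan is to induct on $k$, using the definition of $\overline{\beta^\epsilon Q^i}$ through the tree $T_p$ together with compatibility between the arboreal and simplicial cell structures. The base case $k=0$ is trivial: $[\iota]=\iota$. For the inductive step, write $\beta^{\epsilon_1}Q^{s_1}\wr\cdots\wr\beta^{\epsilon_k}Q^{s_k}\iota = \beta^{\epsilon_1}Q^{s_1}\wr(\beta^{\epsilon_2}Q^{s_2}\wr\cdots\wr\beta^{\epsilon_k}Q^{s_k}\iota)$. I would like to say that applying $\overline{\beta^{\epsilon_1}Q^{s_1}}$ to the class $[\beta^{\epsilon_2}Q^{s_2}\wr\cdots\wr\beta^{\epsilon_k}Q^{s_k}\iota]\in H_*(\mathbb{D}_{p^{k-1}}(S^{2l+1}))$ produces $[\beta^{\epsilon_1}Q^{s_1}\wr\cdots\wr\beta^{\epsilon_k}Q^{s_k}\iota]$, and then the result follows by the inductive hypothesis applied to the $(k-1)$-fold word. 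So the real content is a single claim: that under the structure map $\xi:\partial_p\wedge_{h\Sigma_p}\mathbb{D}_{p^{k-1}}(X)^{\wedge p}\to\mathbb{D}_{p^k}(X)$, the element $T_p\otimes\beta^{\epsilon_1}Q^{s_1}([\beta^{\epsilon_2}Q^{s_2}\wr\cdots])$ maps to the stated simplicial-cell class.

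The key step is therefore to compare the map $T_{p,k}$ (from the surjection $\Sigma^{-k}H_*((S^{2l+1})^{\wedge p^k}_{h\Sigma_p^{\wr k}})\twoheadrightarrow H_*(\mathbb{D}_{p^k}(S^{2l+1}))$ coming from the $T_{p,k}$-cell in the simplicial filtration) with the iterated application of the operadic structure maps to the $T_p$-cell in the arboreal structure. Concretely, I would exhibit a commutative diagram: the operadic composite $\partial_p\wedge(\partial_{p^{k-1}})^{\wedge p}\to\partial_{p^k}$ applied to $T_p\otimes(T_{p,k-1})^{\otimes p}$ gives, on the level of trees, the tree $T_{p,k}$ (grafting $p$ copies of the depth-$(k-1)$ regular tree onto the leaves of $T_p$ is exactly the depth-$k$ regular tree). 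Because $\beta^{\epsilon_1}Q^{s_1}$ is built from the $T_p$-cell of $\partial_p$ and $\sigma^{-1}\beta^{\epsilon_1}Q^{s_1}(\iota)$ on $H_*(\Sigma^{-1}(S^{j})^{\wedge p}_{h\Sigma_p})$ (Lemma \ref{Lem:Syl}), and because the Dyer--Lashof operations $\beta^{\epsilon_1}Q^{s_1}$ satisfy $\beta^{\epsilon_1}Q^{s_1}(e_k\otimes w)=$ the wreath-product class when $w$ is itself a permanent cycle of the right parity, the image of $T_p\otimes(T_{p,k-1}\text{-class})^{\otimes p}$ is $T_{p,k}\otimes(\beta^{\epsilon_1}Q^{s_1}\wr\cdots\wr\beta^{\epsilon_k}Q^{s_k}\iota)$, which is precisely $[\beta^{\epsilon_1}Q^{s_1}\wr\cdots\wr\beta^{\epsilon_k}Q^{s_k}\iota]$ by the Arone--Mahowald description. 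One must check that the relevant class survives: the completely unadmissable condition $s_1>ps_2-\epsilon_2$ is exactly what guarantees the wreath class is nonzero in $H_*(\mathbb{D}_{p^k}(S^{2l+1}))$, and Lemma \ref{Lem:Syl} (applied with $j$ the degree of $\beta^{\epsilon_2}Q^{s_2}\wr\cdots\wr\beta^{\epsilon_k}Q^{s_k}\iota$, though here we only have a class in $\mathbb{D}_{p^{k-1}}$ rather than a sphere) controls the outermost $\Sigma_p$.

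The main obstacle I anticipate is the last parenthetical: Lemma \ref{Lem:Syl} is stated for an \emph{actual sphere} $S^j$, whereas in the inductive step the inner class lives in $H_*(\mathbb{D}_{p^{k-1}}(S^{2l+1}))$, which is not a sphere. To push the argument through I would either (a) represent the inner generator by a map $S^j\to H\mathbb{F}_p\wedge\mathbb{D}_{p^{k-1}}(S^{2l+1})$ and use the naturality built into the very definition of $\overline{\beta^\epsilon Q^i}$ via the map $\tilde{x}$ (so that the computation on $H_*(\partial_p\wedge_{h\Sigma_p}S^{pj})$ is transported along $\tilde x$), or (b) work directly in $H_*((S^{2l+1})^{\wedge p^k}_{h\Sigma_p^{\wr k}})$ before pushing to $\mathbb{D}_{p^k}$, using that the surjection $T_{p,k}$ is compatible with the operadic grafting and that the ordinary (space-level) Dyer--Lashof calculus identifies the iterated operations with the wreath basis on the nose; then the only thing left is to match the arboreal $T_p$-cell datum used to define $\overline{\beta^\epsilon Q^i}$ with the simplicial $T_{p,k}$-cell datum, i.e.\ to check that grafting arboreal $T_p$'s onto simplicial $T_{p,k-1}$'s and then re-reading the result in the simplicial filtration recovers $T_{p,k}$. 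Approach (b) seems cleaner, and I expect the bookkeeping of signs and the parity hypothesis in Lemma \ref{Lem:Syl} to be the only genuinely delicate points.
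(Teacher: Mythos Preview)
Your proposal is correct and follows essentially the same route as the paper: induction on $k$, with the key step being that the operadic composition $\partial_p\wedge(\partial_{p^{k-1}})^{\wedge p}\to\partial_{p^k}$ sends $T_p\otimes T_{p,k-1}^{\otimes p}$ to $T_{p,k}$ (the paper phrases this as converting levelled trees to metric trees, grafting, and converting back), after which a two-path diagram chase through the wreath-product homology---your approach (b)---finishes the argument. Your anticipated obstacle is not a real one: the very definition of $\overline{\beta^\epsilon Q^i}(x)$ already represents $x$ by a map from a sphere and transports the class along $\tilde{x}$, so Lemma~\ref{Lem:Syl} need not be re-invoked for $\mathbb{D}_{p^{k-1}}(S^{2l+1})$; this is precisely your approach (a), and the paper uses it implicitly to make the upper-left square of its diagram commute.
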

\begin{proof}
We prove this by induction by relating our Lie power operations to the computation in \cite{AM99}. For ease of notation let $X=S^{2l+1}$. For the induction base case let $i\geq l$ and $\epsilon=0,1$. Then the following diagram commutes
\xymat{H_{*+1}(X)\ar[r]^-{\overline{\beta^\epsilon Q^i}} & H_{*+2(p-1)i-\epsilon}(\mathbb{D}_p(X)) \\
\Sigma^{-1}H_*(X)\ar@{=}[u]\ar[r]^-{q_{i,\epsilon}} & \Sigma^{-1}H_{*+2(p-1)i-\epsilon}(X^{\wedge p}_{h\Sigma_p})\ar[u]^{T_p}}
by how we defined $\overline{\beta^\epsilon Q^i}$ by the cell $T_p$, and by the fact that $T_p$ in both of the cellular filtrations of $\partial_p$ is the only $(-1)$-cell. This shows that the element called $[\beta^\epsilon Q^i \iota]$ is equal to $\overline{\beta^\epsilon Q^i}(\iota)$ for $\iota$ a generator of $H_{2l+1}(X)$. 

By converting levelled trees to metric trees, then grafting, and converting back to levelled trees, we can see that under the map $\partial_p\wedge \partial_{p^k}^{\wedge p} \stackrel{\xi}{\to} \partial_{p^{k+1}}$ we have in simplicial cells that $T_p\otimes T_{p,k}^{\otimes p}\mapsto T_{p,k+1}$. Pick $I=(\epsilon,i,\epsilon_k,i_k,\ldots \epsilon_1,i_1,l)$ to be a completely unadmissable sequence  and study the following diagram:
\xymat{H_{*+1}\mathbb{D}_{p^k}(X) \ar[r]^-{\overline{\beta^\epsilon Q^i}} & H_*(\partial_p \wedge_{h\Sigma_p}\mathbb{D}_{p^k}(X)) \ar[r]^\xi & H_*\mathbb{D}_{p^{k+1}}(X)\\
&\Sigma^{-1}H_*(\Sigma^{-k}X^{\wedge p^k}_{h\Sigma_p^{\wr k}})^{\wedge p}_{h\Sigma_p} \ar[u]^{T_p\wedge (T_{p,k})^{\wedge p}} & \\
\Sigma^{-k-1}H_*X^{\wedge p^k}_{h\Sigma_p^{\wr k}} \ar@{->>}[uu]^{T_{p,k}} \ar[r]^-{q_{i,\epsilon}} & \Sigma^{-1} \Sigma^{-k}H_*(X^{\wedge p^k}_{h\Sigma_p^{\wr k}})^{\wedge p}_{h\Sigma_p} \ar[u]^{\tau} \ar[r] & \Sigma^{-k-1} H_*X^{\wedge p^{k+1}}_{h\Sigma_p^{\wr k+1}} \ar@{->>}[uu]_{T_{p,k+1}}}
where $\tau$ sends an element $\beta^{\epsilon_{k+1}}Q^{i_{k+1}}\wr \beta^{\epsilon_{k}}Q^{i_{k}} \wr \ldots \wr \beta^{\epsilon_{k+1}}Q^{i_{k+1}}$ to the element $\beta^{\epsilon_{k+1}}Q^{i_{k+1}}\wr \sigma^{-k} \big(\beta^{\epsilon_{k}}Q^{i_{k}} \wr \ldots \wr \beta^{\epsilon_{k+1}}Q^{i_{k+1}}\big)$, and where $\sigma^{-k}$ is the $k$-fold desuspension.
By the argument above this diagram commutes. Further, if we start in the lower left corner with the element $\beta^{\epsilon_1}Q^{i_1}\wr\cdots \wr\beta^{\epsilon_k}Q^{i_k}\iota$, 
then chasing through the upper left corner gives us $\overline{\beta^\epsilon Q^i}\big(\overline{\beta^{\epsilon_1}Q^{i_1}}(\cdots \overline{\beta^{\epsilon_k}Q^{i_k}}(\iota)\ldots \big)$ in the upper right corner by the induction hypothesis, and chasing through the lower right corner gives us $[\beta^{\epsilon_1}Q^{i_1}\wr\cdots \wr\beta^{\epsilon_k}Q^{i_k}\iota]$ in the upper right corner.
 \end{proof}

\subsection{The Even Dimensional Case}
The direct computation in the odd dimensional case in \cite[Thm. 3.16]{AM99} does not extend to the case of even spheres. We will therefore rely on the following result of the EHP sequence in functor calculus. 

From \cite[Cor. 2.1.4]{Beh12} we know that the EHP sequence induces the following result:
\begin{Prop}[Behrens, \cite{Beh12}] \label{BehEHP}
For $n\geq 1$ the following are fiber sequences of spectra:
\begin{align}
\mathbb{D}_{2m}(S^n) \stackrel{E}{\to} & \Sigma^{-1} \mathbb{D}_{2m}(S^{n+1}) \stackrel{H}{\to}  \mathbb{D}_m(S^{2n+1}) \label{EHPev} \\
\mathbb{D}_{2m-1}(S^n) \stackrel{E}{\to}& \Sigma^{-1} \mathbb{D}_{2m-1}(S^{n+1}) \stackrel{H}{\to}  {*} \label{EHPod} 
\end{align}
\end{Prop}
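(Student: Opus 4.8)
Since the statement is quoted essentially verbatim from \cite[Cor.~2.1.4]{Beh12}, the natural plan is simply to invoke that corollary, and that is what I would do in the paper. For completeness let me indicate the shape of the argument one reconstructs.

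The starting point is the James/EHP fibre sequence of spaces $S^n\to\Omega S^{n+1}\to\Omega S^{2n+1}$ (the map $S^n\to\Omega S^{n+1}$ being the suspension $E$, the second map the James--Hopf invariant $H$), which is a fibre sequence $p$-locally in a range growing with $n$; the James splitting $\Omega S^{2m}\simeq S^{2m-1}\times\Omega S^{4m-1}$ is the analogous phenomenon accounting for the trivial case (\ref{EHPod}) as opposed to the nontrivial case (\ref{EHPev}). One feeds these through the Goodwillie tower of the identity and reads off what happens layer by layer. The essential point is that $H$ is compatible with the polynomial filtration in a way that \emph{halves} the degree: on associated graded it is modelled by the folding map $({-})^{\wedge 2m}\to(({-})^{\wedge 2})^{\wedge m}$ (killing the odd-weight layers), which is why the $2m$-th layer maps to the $m$-th layer, with the target re-expressed in terms of the sphere $S^{2n+1}=\Sigma(S^n\wedge S^n)$; meanwhile $E$ shifts the indexing only by a loop, producing the $\Sigma^{-1}$ in $\Sigma^{-1}\mathbb{D}_{2m}(S^{n+1})$. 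A convenient identification along the way is the twisted-suspension formula $\Sigma^{-1}\mathbb{D}_k(S^{n+1})\simeq\bigl(\partial_k\wedge S^{\bar\rho_k}\wedge(S^n)^{\wedge k}\bigr)_{h\Sigma_k}$, with $\bar\rho_k$ the reduced permutation representation of $\Sigma_k$, which recasts the whole discussion as a statement about $\Sigma_k$-equivariant cell structures; recall also that a cofibre sequence of spectra is automatically a fibre sequence.

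The one genuinely nonformal step — and the one I would expect to be the main obstacle were one to write out a proof — is the identification of the fibre of $H$ on layers with $\mathbb{D}_m(S^{2n+1})$, together with the collapse in the odd case. This is where the specific structure of the derivatives enters: $\partial_*$ is an operadic bar construction \cite{Chi05}, and one must check that under $H$ the $\Sigma_{2m}$-equivariant cell structure of $\partial_{2m}$ reorganizes compatibly into the $\Sigma_m\wr\Sigma_2$-equivariant cell structure of $\partial_m\wedge\partial_2^{\wedge m}$, which then reassembles into $\partial_m$ (and leaves nothing behind in odd degree, matching the James splitting). Carrying out this bookkeeping is essentially the content of \cite{Beh12}, and I would not reproduce it; for the present paper I would simply cite \cite[Cor.~2.1.4]{Beh12} and move on.
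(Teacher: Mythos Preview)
Your proposal is correct and matches the paper exactly: the paper gives no proof of this proposition, simply citing \cite[Cor.~2.1.4]{Beh12} and stating the result. Your additional sketch is extra (and has a minor slip: it is the \emph{cofibre of $E$}, i.e.\ the target of $H$, that one must identify with $\mathbb{D}_m(S^{2n+1})$, not the fibre of $H$), but the strategy of citing Behrens and moving on is precisely what the paper does.
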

Let $F:Top_*\to Top_*$ be a finitary homotopy functor, and let $\partial_*(F)$ denote its derivatives. 
Then as modules over $\partial_*$ we have from \cite[Ex. 19.4]{AC11} the following identification $\partial_m(\Omega\Sigma)\simeq \Sigma^{-1}\partial_m\wedge S^m$, so the suspension natural transformation $E:X\to \Omega \Sigma X$ induces a map $E:\partial_m \to \Sigma^{-1}\partial_m\wedge S^m$ compatible with the $\partial_*$-module structure. 
Therefore we get, for any spectrum $X$ and $m\in \mathbb{N}_0$, natural maps $E: \mathbb{D}_m(X)\to \Sigma^{-1}\mathbb{D}_m(\Sigma X)$ such that if $x\in H_*\mathbb{D}_m(X)$, then $E: \mathbb{D}_{pm}(X)\to \Sigma^{-1}\mathbb{D}_{pm}(\Sigma X)$ satisfies $E(\overline{\beta^\epsilon Q^i}(x))=\overline{\beta^\epsilon Q^i}(E(x))$. Similarly, the map $H$ preserves Lie power operations. 

For some time it was unknown how to extend these relations between the odd dimensional and even dimensional cases to the negative spheres. This was solved by Brantner in his thesis \cite[Section 4.1.3, Free Lie algebras on nonconnective spectra]{Bra17} from which we have the following result:
\begin{Lem}[Brantner, \cite{Bra17}] \label{Bran}
For any $n$ we get long exact sequences:
\begin{align*}
\ldots \to H_*\mathbb{D}_{2m}(S^n) \stackrel{E}{\to} & H_*\Sigma^{-1} \mathbb{D}_{2m}(S^{n+1}) \stackrel{H}{\to}  H_*\mathbb{D}_m(S^{2n+1}) \to \ldots
\end{align*}
and isomorphisms $E:H_*\mathbb{D}_{2m-1}(S^n)\stackrel{\cong}{\to} H_*\Sigma^{-1} \mathbb{D}_{2m-1}(S^{n+1})$.
\end{Lem}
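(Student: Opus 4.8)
The plan is to split the range of $n$. For $n\geq 1$ both assertions are immediate from Proposition \ref{BehEHP}: a fibre sequence of spectra is a cofibre sequence, so smashing \eqref{EHPev} with $H\mathbb{F}_p$ produces the long exact sequence in homology, while smashing \eqref{EHPod} with $H\mathbb{F}_p$ shows $E$ is an $H\mathbb{F}_p$-homology isomorphism, its cofibre being contractible. The content of the lemma is therefore the extension to $n\leq 0$, which I would obtain from the $n\geq 1$ case by a $2$-periodicity argument in $n$, after first observing that the maps $E$ and $H$ are defined for all $n$.

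For that, I would note that $E$ is defined for every $n\in\mathbb{Z}$, being induced (as recalled above) by the $\partial_*$-module map $\partial_m\to\Sigma^{-1}\partial_m\wedge S^m$, and likewise that the James--Hopf map is a natural transformation $\Sigma^{-1}\mathbb{D}_{2m}(\Sigma Y)\to\mathbb{D}_m(\Sigma Y^{\wedge 2})$ in the spectrum $Y$, so it too specializes to $Y=S^n$ for all $n$. The essential extra input is a periodicity statement: for any $j$, the $\Sigma_j$-spectrum $(S^2)^{\wedge j}$ becomes, after smashing with $H\mathbb{F}_p$, equivalent to $\Sigma^{2j}H\mathbb{F}_p$ with \emph{trivial} $\Sigma_j$-action, since a Borel $\Sigma_j$-action on the invertible $H\mathbb{F}_p$-module $\Sigma^{2j}H\mathbb{F}_p$ is classified by a character $\Sigma_j\to\mathbb{F}_p^\times$, and here that character is $\sigma\mapsto\mathrm{sgn}(\sigma)^2=1$. (By contrast $(S^1)^{\wedge j}\wedge H\mathbb{F}_p$ carries the nontrivial sign character, which is exactly the source of the even/odd dichotomy.) Feeding this equivalence into $\mathbb{D}_m(S^{n+2})\wedge H\mathbb{F}_p=(\partial_m\wedge(S^n)^{\wedge m}\wedge(S^2)^{\wedge m})_{h\Sigma_m}\wedge H\mathbb{F}_p$ produces natural equivalences $\mathbb{D}_m(S^{n+2})\wedge H\mathbb{F}_p\simeq\Sigma^{2m}(\mathbb{D}_m(S^n)\wedge H\mathbb{F}_p)$; one can equally phrase this through the spectral sequence of Lemma \ref{SS}, whose $E^1$-page for $\mathbb{D}_m(S^n)$ depends on $n$ only through its parity and a degree shift. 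Because $E$ and the James--Hopf map are built from operadic data that is insensitive to the sphere smash factors, while the equivalence above only retrivialises such a factor, these periodicity equivalences are compatible with $E$ and $H$.

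Granting that, the long exact sequence \eqref{EHPev} for $S^{n+2}$ is identified with the one for $S^n$ shifted uniformly in homological degree by $4m$ (each of $\mathbb{D}_{2m}(S^n)$, $\Sigma^{-1}\mathbb{D}_{2m}(S^{n+1})$, $\mathbb{D}_m(S^{2n+1})$ shifting by $4m$, using $S^{2(n+2)+1}=S^{(2n+1)+4}$ for the last term); likewise the $\mathbb{D}_{2m-1}$-isomorphism for $S^{n+2}$ is the one for $S^n$ shifted by $2(2m-1)$. Hence each statement holds for $n$ if and only if it holds for $n+2$, and propagating this equivalence by $\pm2$ from the odd base case $n=1$ and the even base case $n=2$ covers every $n\in\mathbb{Z}$.

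The main obstacle is the compatibility asserted at the end of the second paragraph: checking that the James--Hopf map is genuinely natural for non-connective spheres, and that the periodicity equivalences can be chosen $\Sigma_j$-equivariantly (in the Borel sense) and coherently enough to commute with $E$ and $H$ up to homotopy. This is precisely what Brantner carries out in \cite[Section 4.1.3]{Bra17}, to which I would defer for that step; everything else is the indexing bookkeeping above.
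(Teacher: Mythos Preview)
Your approach is correct and matches the paper's: both reduce the $n\leq 0$ case to the positive case via the equivariant trivialization of $(S^{2})^{\wedge m}\wedge H\mathbb{F}_p$ as a Borel $\Sigma_m$-spectrum, and both defer the naturality with respect to $E$ and $H$ to Brantner. The only cosmetic difference is that the paper phrases the trivialization through complex orientation---observing that $(S^{2n})^{\wedge m}=S^{V}$ for the complex permutation representation $V$, so the Thom isomorphism gives the result for any complex oriented theory---whereas you obtain it specifically for $H\mathbb{F}_p$ by reading off the (trivial) sign character on $\pi_*$.
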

In fact this holds even if we replace $H\mathbb{F}_p$ with any complex oriented cohomology theory. The proof comes from the fact that if $E$ is complex oriented, and $V$ is any complex $G$-representation then $E\wedge S^V$ and $E\wedge S^{\dim V}$ are equivalent as naive $G$-spectra. So in particular if $n\geq 0$ then $\Sigma^{2mn}H\mathbb{F}_p \wedge \partial_m \wedge (S^{-n})^{\wedge m}$, where $\Sigma_m$ acts trivially on $\Sigma^{mn}$, and by permuting the coordinates $(S^{-n})^{\wedge m}$, is equivalent as a naive $\Sigma_m$-spectrum to $H\mathbb{F}_p \wedge \partial_ m \wedge (S^{n})^{\wedge m}$, which gives the result.   

\begin{Lem} \label{[i,i]}
$H_*\mathbb{D}_{2}(S^{2l})$ is generated by the element $[\iota,\iota]$, where $\iota\in H_*\mathbb{D}_1(S^{2l})\cong H_*(S^{2l})$ is a generator. 
\end{Lem}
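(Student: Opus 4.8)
The plan is to compute $H_*\mathbb{D}_2(S^{2l})$ outright and then recognize its generator as $[\iota,\iota]$. Since $\partial_2\simeq S^{-1}$ with the trivial $\Sigma_2$-action, there is an equivalence $\mathbb{D}_2(S^{2l})=(\partial_2\wedge (S^{2l})^{\wedge 2})_{h\Sigma_2}\simeq \Sigma^{-1}\big((S^{2l})^{\wedge 2}\big)_{h\Sigma_2}$, where $\Sigma_2$ permutes the two smash factors. First I would run the homotopy orbit spectral sequence $E^2_{s,t}=H_s(\Sigma_2;H_t((S^{2l})^{\wedge 2}))\Rightarrow H_{s+t}\big(((S^{2l})^{\wedge 2})_{h\Sigma_2}\big)$. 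The group $H_t((S^{2l})^{\wedge 2})$ is concentrated in degree $t=4l$, where the transposition acts by the sign $(-1)^{(2l)^2}=1$, so it is the trivial module $\mathbb{F}_p$. Because $p$ is odd, $|\Sigma_2|$ is a unit in $\mathbb{F}_p$, hence $H_s(\Sigma_2;\mathbb{F}_p)=0$ for $s>0$ (again by \cite[Cor. 6.5.9]{Wei95}) and $=\mathbb{F}_p$ for $s=0$. The spectral sequence is therefore concentrated on the line $t=4l$, collapses, and gives $H_*\big(((S^{2l})^{\wedge 2})_{h\Sigma_2}\big)\cong\mathbb{F}_p$ in degree $4l$; desuspending, $H_*\mathbb{D}_2(S^{2l})\cong\mathbb{F}_p$ is concentrated in degree $4l-1$.

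It remains to check that $[\iota,\iota]\neq 0$. By definition $[\iota,\iota]=\xi_*(T_2\otimes\iota\otimes\iota)$ is the image of the class $T_2\otimes\iota\otimes\iota$ under the canonical map $H_*(\partial_2\wedge(S^{2l})^{\wedge 2})\to H_*\big((\partial_2\wedge(S^{2l})^{\wedge 2})_{h\Sigma_2}\big)=H_*\mathbb{D}_2(S^{2l})$. Using $\partial_2\simeq S^{-1}$ with trivial $\Sigma_2$-action, this map is identified (under the desuspension sending $T_2\otimes(-)$ to $\sigma^{-1}(-)$, as recorded after the definition of $\overline{\beta^\epsilon Q^i}$) with the natural map $H_*((S^{2l})^{\wedge 2})\to H_*\big(((S^{2l})^{\wedge 2})_{h\Sigma_2}\big)$, which in the spectral sequence above is the coinvariants projection $H_{4l}((S^{2l})^{\wedge 2})\twoheadrightarrow\big(H_{4l}((S^{2l})^{\wedge 2})\big)_{\Sigma_2}=E^\infty_{0,4l}=H_{4l}\big(((S^{2l})^{\wedge 2})_{h\Sigma_2}\big)$. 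Since the $\Sigma_2$-action is trivial this projection is an isomorphism, so $\iota\otimes\iota$ maps to a generator and hence $[\iota,\iota]$ generates $H_{4l-1}\mathbb{D}_2(S^{2l})$.

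There is essentially no obstacle here beyond this bookkeeping; the one place the even-dimensionality is used is the sign computation $(-1)^{(2l)^2}=1$, which is exactly what makes the relevant $\Sigma_2$-module trivial. For an odd sphere the sign would be $-1$, the coinvariants $\mathbb{F}_p/(2)$ vanish since $p$ is odd, and one recovers $H_*\mathbb{D}_2(S^{2l+1})=0$, consistent with Theorem \ref{AMbasis}. No convergence issue arises since all the spectral sequences in play are concentrated in a single internal degree.
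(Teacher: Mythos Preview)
Your argument is correct and in fact slightly cleaner than the paper's. The paper proceeds by first identifying, for $l>0$, the spectrum $\mathbb{D}_2(S^{2l})$ with the stunted projective space $\Sigma^{2l-1}(\mathbb{R}P^\infty/\mathbb{R}P^{2l-1})$, reads off its mod $p$ homology as $\mathbb{F}_p$ in degree $4l-1$, checks that the quotient map $\partial_2\wedge S^{2l}\wedge S^{2l}\to\mathbb{D}_2(S^{2l})$ is a homology isomorphism carrying $T_2\otimes\iota\otimes\iota$ to $[\iota,\iota]$, and then invokes Brantner's complex-orientation trick (Lemma \ref{Bran}) to extend the statement to $l\leq 0$.

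Your route bypasses both the stunted projective space model and the appeal to Lemma \ref{Bran}: because $|\Sigma_2|$ is invertible in $\mathbb{F}_p$, the homotopy orbit spectral sequence collapses to the coinvariants line for \emph{every} integer $l$, and the Koszul sign $(-1)^{(2l)^2}=+1$ holds regardless of the sign of $l$. So a single argument covers all cases. The paper's approach has the minor advantage of exhibiting an explicit space-level model of $\mathbb{D}_2(S^{2l})$, but for the purposes of this lemma your direct spectral sequence computation is both shorter and more uniform.
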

\begin{proof}
Note that $\partial_2$ consists of a single $\Sigma_2$-fixed $(-1)$-cell labelled by $T_2$, the same cell that carries the bracket operation. A routine calculation shows that when $l$ is positive $\mathbb{D}_{2}(S^{21})\simeq \Sigma^{2l-1}(\mathbb{R}P^\infty / \mathbb{R}P^{2l-1})$, which has homology concentrated in dimension $4l-1$. Furthermore it is easy to see that the map 
\begin{align*}
\partial_2\wedge S^{2l} \wedge S^{2l}=\partial_2\wedge \mathbb{D}_1(S^{2l}) \wedge \mathbb{D}_1(S^{2l})\to \mathbb{D}_2(S^{2l})
\end{align*}
is a homology isomorphism taking the generator $T_2\otimes \iota \otimes \iota$ to $[\iota , \iota ]$. 

Using the argument in lemma \ref{Bran} we can easily extend this to negative $l$'s as well.
\end{proof}

We are now ready to give the full case for the even dimensional sphere:
\begin{Cor}
As a $\mathbb{F}_p$ vector space, $H_*\mathbb{D}_m(S^{2l})$ has a basis
\begin{itemize}
\item $\big\{\overline{\beta^{\epsilon_1}Q^{s_1}}\cdots \overline{\beta^{\epsilon_k}Q^{s_k}}\iota \big|s_k\geq l,\  s_i>ps_{i+1}-\epsilon_{i+1}\, \forall i\big\}$ when $m=p^k$ for some $k$,
\item $\big\{\overline{\beta^{\epsilon_1}Q^{s_1}}\cdots \overline{ \beta^{\epsilon_k}Q^{s_k}}[\iota,\iota] \big|s_k\geq 2l,\  s_i>ps_{i+1}-\epsilon_{i+1}\, \forall i\big\}$ when $m=2p^k$ for some $k$, and
\item $\emptyset$ when $m\neq 2p^k$, or $m\neq p^k$ for any $k$.
\end{itemize}
where $\iota\in H_*\mathbb{D}_1(S^{2l})=H_*(S^{2l})$ is a generator.
\end{Cor}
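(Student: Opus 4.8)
The plan is to bootstrap from the odd-sphere computation of Theorem~\ref{AMbasis} (read through Proposition~\ref{AMP=LieP}) using the two EHP long exact sequences of Lemma~\ref{Bran}, with Lemma~\ref{[i,i]} as the base case of the $2p^{k}$-family; negative $l$ is then absorbed exactly as at the end of the proof of Lemma~\ref{[i,i]}, so I will think of $l$ as large. Throughout write $\iota$ for the generator of $H_*\mathbb{D}_1(S^{2l})=H_*(S^{2l})$.

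First I would pin down which $m$ contribute. If $m$ is odd, the isomorphism $E\colon H_*\mathbb{D}_m(S^{n})\xrightarrow{\cong}H_*\Sigma^{-1}\mathbb{D}_m(S^{n+1})$ of Lemma~\ref{Bran} lets me climb from $S^{2l}$ up to an odd sphere, where Theorem~\ref{AMbasis} forces $H_*\mathbb{D}_m(S^{2l})=0$ unless $m=p^{k}$. If $m=2j$ with $j$ not a power of $p$, then $H_*\mathbb{D}_{j}(S^{2n+1})=0$ for every $n$ by Theorem~\ref{AMbasis}, so the final term of the appropriate shift of \eqref{EHPev} vanishes, $E$ is again an isomorphism, and climbing up kills $H_*\mathbb{D}_{2j}(S^{2l})$. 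Thus only $m=p^{k}$ and $m=2p^{k}$ survive, which matches the three bullets; it remains to exhibit the bases in these two cases.

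For $m=p^{k}$: since $p^{k}$ is odd, Lemma~\ref{Bran} directly gives an isomorphism $E\colon H_*\mathbb{D}_{p^{k}}(S^{2l})\xrightarrow{\cong}H_*\Sigma^{-1}\mathbb{D}_{p^{k}}(S^{2l+1})$ which, by the construction of $E$ recalled before Lemma~\ref{Bran}, commutes with the Lie power operations and sends $\iota$ to the desuspended generator of $H_{2l+1}(S^{2l+1})$; hence it identifies $\overline{\beta^{\epsilon_1}Q^{s_1}}\cdots\overline{\beta^{\epsilon_k}Q^{s_k}}\iota$ with the class $[\beta^{\epsilon_1}Q^{s_1}\wr\cdots\wr\beta^{\epsilon_k}Q^{s_k}\iota]$ of Proposition~\ref{AMP=LieP}, and by Theorem~\ref{AMbasis} these run over a basis exactly as the exponent sequence runs over completely unadmissable sequences with bottom bound $l$. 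That gives the first bullet. For $m=2p^{k}$ I would induct on $k$: the case $k=0$ is Lemma~\ref{[i,i]}, and for the step I feed the suitable shift of \eqref{EHPev} with total weight $2p^{k+1}$ into Lemma~\ref{Bran}; because $\mathbb{D}_{2p^{k+1}}$ of the neighbouring odd sphere has vanishing $\mathbb{F}_p$-homology (Theorem~\ref{AMbasis}, $2p^{k+1}$ not being a $p$-power), the long exact sequence collapses to an isomorphism — realized by a structure map of the sequence, hence compatible with the Lie power operations — between $H_*\mathbb{D}_{2p^{k+1}}(S^{2l})$, up to the conventional suspension, and $H_*\mathbb{D}_{p^{k+1}}$ of the odd sphere whose Arone--Mahowald bottom bound is exactly $2l$. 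Applying the $k=0$ case, where both sides are one–dimensional over $\mathbb{F}_p$, this isomorphism matches $[\iota,\iota]$ with the bottom class $\iota$ of that odd sphere, so it carries $\overline{\beta^{\epsilon_1}Q^{s_1}}\cdots\overline{\beta^{\epsilon_{k+1}}Q^{s_{k+1}}}[\iota,\iota]$ to $[\beta^{\epsilon_1}Q^{s_1}\wr\cdots\wr\beta^{\epsilon_{k+1}}Q^{s_{k+1}}\iota]$, which by Theorem~\ref{AMbasis} runs over a basis as the sequence runs over completely unadmissable ones with bound $2l$. That is the second bullet. (Alternatively: linear independence of the $\overline{\cdots}[\iota,\iota]$ can be extracted from an isomorphism realized by the Lie-power-compatible map $H$, and spanning then follows by comparing dimensions with Theorem~\ref{AMbasis} through the collapsed sequence.)

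I expect the main obstacle to be the suspension bookkeeping inside \eqref{EHPev}: one must track the shifts precisely enough to see that, after the long exact sequence collapses, the surviving term is $H_*\mathbb{D}_{p^{k+1}}$ of the odd sphere whose Arone--Mahowald bottom bound is exactly $2l$ — not the neighbouring odd sphere, whose bound would be off by one — and one must check that the collapsing isomorphism can be expressed through $E$ and $H$ (which respect the Lie power operations) rather than only through the connecting homomorphism. Neither point is conceptually hard given the inputs already assembled, but both require care.
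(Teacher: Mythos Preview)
Your proposal is correct and follows the same approach as the paper: the paper's own proof is the single sentence ``This clearly follows from Proposition~\ref{BehEHP}, Lemma~\ref{Bran} and~\ref{[i,i]}, Theorem~\ref{AMbasis} and~\ref{AMP=LieP},'' and you are using precisely these five ingredients, only with the logical dependencies spelled out. Your closing caveats about suspension bookkeeping and about whether the collapsing isomorphism is realised by $E$, $H$, or the connecting map are exactly the details the paper's one-line proof leaves implicit; note that since the entire EHP sequence arises from maps of $\partial_*$-modules, the connecting map is also compatible with the Lie power operations, so you need not avoid it. One small comment: your ``induction on $k$'' for the $2p^k$ case is really just the $k=0$ identification of $[\iota,\iota]$ with the odd-sphere generator together with compatibility of the EHP maps with power operations, not a genuine inductive step.
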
 
\begin{proof}
This clearly follows from Proposition \ref{BehEHP}, Lemma \ref{Bran} and \ref{[i,i]}, Theorem \ref{AMbasis} and \ref{AMP=LieP}. 
\end{proof}

Note that for $p\neq 3$, both the Jacobi identity and Proposition \ref{BehEHP} give us that $[\iota_{2l},[\iota_{2l},\iota_{2l}]]$ is trivial in $H_*(\mathbb{D}_3(S^{2l}))$. For $p=3$ however we need to check this by hand (note this is similar to the fact that in characteristic 3, there are two different notions of Lie algebras dependent on whether or not to include the axiom $[x,[x,x]]=0$).  
\begin{Cor} \label{[x,[x,x]]}
Let $p=3$. In $H_*(\mathbb{D}_3(S^{2l}))$ the element $[\iota_{2l},[\iota_{2l},\iota_{2l}]]$ is trivial.
\end{Cor}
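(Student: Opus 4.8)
The plan is to compute $H_*(\mathbb{D}_3(S^{2l}))$ explicitly as an $\mathbb{F}_p$ vector space and observe that it is simply too small to contain a nonzero element in the degree of $[\iota_{2l},[\iota_{2l},\iota_{2l}]]$. By the Corollary just proved (the even-sphere basis), at $p=3$ we have $m=3 = p^1$, so $H_*\mathbb{D}_3(S^{2l})$ has basis $\{\overline{\beta^{\epsilon_1}Q^{s_1}}\iota \mid s_1\geq l\}$, which lives entirely in the Lie power operation summand coming from $\mathbb{D}_3 = \mathbb{D}_{p}$ applied to $\iota$; in particular every basis element has degree of the form $2l+2(p-1)s_1-\epsilon_1-1 = 2l+4s_1-\epsilon_1-1$ with $s_1\geq l$. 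Meanwhile $[\iota_{2l},[\iota_{2l},\iota_{2l}]]$ has degree $|\iota|+|\iota|+|\iota|-1-1 = 6l-2$ by two applications of the bracket (each drops degree by $1$).

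First I would record that $6l-2$ is achieved by a basis element precisely when $2l+4s_1-\epsilon_1-1 = 6l-2$, i.e. $4s_1 - \epsilon_1 = 4l-1$; since $\epsilon_1 \in \{0,1\}$ and $4 \nmid 4l-1$, the only possibility is $\epsilon_1 = 1$, $4s_1 = 4l$, so $s_1 = l$, $\epsilon_1 = 1$. Hence the only candidate basis element in that degree is $\overline{\beta Q^l}\iota$. So the claim reduces to showing that $[\iota_{2l},[\iota_{2l},\iota_{2l}]]$ is not a nonzero multiple of $\overline{\beta Q^l}\iota$. For $l > 0$ this can be done directly: by Lemma \ref{[i,i]}, $H_*\mathbb{D}_2(S^{2l})$ is concentrated in degree $4l-1$ and generated by $[\iota,\iota]$, and the second bracket is the map $\xi_*(T_2\otimes -\otimes -) : H_*\mathbb{D}_1(S^{2l})\otimes H_*\mathbb{D}_2(S^{2l}) \to H_*\mathbb{D}_3(S^{2l})$; I would analyze this via the arboreal cell spectral sequence for $\mathbb{D}_3(S^{2l})$, or more simply note that the composite factors through the $(-1)$-cell $T^{1,3}$ of $\partial_3$, whereas $\overline{\beta Q^l}\iota$ is detected (via the quotient onto the top cell $T_p = T_3$) by a genuinely $(-2)$-cellular class $\sigma^{-1}\beta Q^l\iota$ in $H_*(\Sigma^{-1}(S^{2l})^{\wedge 3}_{h\Sigma_3})$ — these are represented by distinct cells, and a degree/cell-filtration argument separates them. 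The extension to $l \leq 0$ then follows from the naive-equivariant-equivalence argument of Lemma \ref{Bran}, exactly as in the proofs of Lemma \ref{[i,i]} and the preceding Corollary, since both $[\iota,[\iota,\iota]]$ and $\overline{\beta Q^l}\iota$ are natural in the suspension coordinate.

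I expect the main obstacle to be making precise the claim that $[\iota_{2l},[\iota_{2l},\iota_{2l}]]$, which a priori is a chain-level class supported on $T^{1,3}$-type cells of $\partial_3$, cannot secretly equal the top-cell class $\overline{\beta Q^l}\iota$ after passing to homology — i.e. controlling the attaching maps in the arboreal cell decomposition of $\partial_3$ well enough to see that the iterated bracket dies. Concretely, the iterated bracket corresponds to the chain $T^{1,3}\otimes\iota\otimes\iota\otimes\iota$ (up to sign and permutation), and since $d(T_3) = T^{1,3}+T^{2,3}+T^{3,3}$ in $C^{CW}_*(\partial_3)$ (as used in the proof of Proposition \ref{LieBrac}), one shows that in $H_*(\partial_3\wedge_{h\Sigma_3}(S^{2l})^{\wedge 3})$ the classes from $T^{1,3}$, $T^{2,3}$, $T^{3,3}$ are permuted by $\Sigma_3$ and their span is annihilated — so $[\iota,[\iota,\iota]]$ maps to zero already before applying $\xi$ to $L$. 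This is the $p=3$ analogue of the computation behind $[x,\overline{\beta^\epsilon Q^k}y]=0$ and should be handled by the same arboreal-cell-plus-transfer bookkeeping; once that vanishing is in hand, applying $\xi_*$ gives the result.
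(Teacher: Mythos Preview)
Your first approach---identifying the unique basis element $\overline{\beta Q^l}\iota$ in degree $6l-2$ via the preceding Corollary, and then separating it from $[\iota,[\iota,\iota]]$ by the arboreal cell filtration---is exactly what the paper does, and it works without the obstacle you anticipate. You have the cell dimensions swapped: $T^{1,3}$ is a $(-2)$-cell and $T_3=T_p$ is the $(-1)$-cell. With that correction, the two-line spectral sequence of Lemma~\ref{SS} gives a long exact sequence
\[
\cdots \to H_{6l-2}\bigl(\Sigma^{-2}(S^{2l})^{\wedge 3}_{h\Sigma_2}\bigr) \xrightarrow{\alpha} H_{6l-2}\,\mathbb{D}_3(S^{2l}) \xrightarrow{\beta} H_{6l-2}\bigl(\Sigma^{-1}(S^{2l})^{\wedge 3}_{h\Sigma_3}\bigr) \to \cdots
\]
in which $[\iota,[\iota,\iota]]$ lies in the image of $\alpha$ (it is carried by the $T^{1,3}$-orbit), while $\beta(\overline{\beta Q^l}\iota) = \sigma^{-1}\beta Q^l\iota \neq 0$ by the very definition of the operation. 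Since every nonzero class in $H_{6l-2}\,\mathbb{D}_3(S^{2l})$ is a nonzero multiple of $\overline{\beta Q^l}\iota$ and hence has nonzero image under $\beta$, exactness forces $[\iota,[\iota,\iota]] = 0$. No further control of attaching maps is needed.

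Your second approach, however, has a genuine gap. The chain relation $d(T_3) = T^{1,3}+T^{2,3}+T^{3,3}$, combined with the fact that the $\Sigma_3$-action identifies the three classes $T^{i,3}\otimes\iota^{\otimes 3}$ in the homotopy orbits, only yields $3\cdot[\iota,[\iota,\iota]] = 0$, which in characteristic~$3$ is vacuous. This is precisely why the case $p=3$ requires separate treatment (as noted in the paragraph preceding the Corollary): for $p \neq 3$ your second argument---equivalently, the Jacobi identity applied to $x=y=z=\iota$---already suffices, but at $p=3$ one really must use the filtration argument above.
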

\begin{proof}
In dimension $6l-3$, $H_*\mathbb{D}_3(S^{2l-1})$ has one generator $\overline{\beta Q^l} \iota_{2l-1}$. Under the isomorphism $H_*\mathbb{D}_3(S^{2l-1})\simeq H_*\Sigma^{-1}\mathbb{D}_3(S^{2l})$ this element maps to $\sigma^{-1}\overline{\beta Q^l} \iota_{2l}$ under 
the identification, where $\sigma^{-1}$ is the desuspension. We therefore just need to show that $[\iota_{2l},[\iota_{2l},\iota_{2l}]]$ 
is not a non-trivial multiple of $\overline{\beta Q^l} \iota_{2l}$. Tracing through the definitions we see that $\overline{\beta Q^l} \iota_{2l}$ 
is carried by the cell labelled by the tree $T_3$, and $[\iota_{2l},[\iota_{2l},\iota_{2l}]]$ is carried by the cell labelled by the $\Sigma_3$ orbit of $T^{1,3}$. The spectral sequence of Lemma \ref{SS} computing $H_*\mathbb{D}_3(S^{2l})$ has only two lines, so 
therefore we get a long exact sequence
\begin{align*}
\ldots \to H_{6l-2}(\Sigma^{-2}S^{2l})^{\wedge 3}_{h\Sigma_{T^{1,3}}} \to H_{6l-2}\mathbb{D}_3(S^{2l}) \to H_{6l-2}(\Sigma^{-1}S^{2l})^{\wedge 3}_{h\Sigma_{T_3}} \stackrel{d_1}{\to} \ldots
\end{align*}
Clearly 
\xymat{H_{6l-2}\mathbb{D}_3(S^{2l}) \ar[r] & H_{6l-2}(\Sigma^{-1}S^{2l})^{\wedge 3}_{h\Sigma_{T_3}} \\
\overline{\beta Q^l}\iota_{2l} \ar@{|->}[r] & \sigma^{-1}\beta Q^l\iota_{2l}}
by definition, where $\sigma^{-1}$ denotes the desuspension, and $\Sigma_{T_3}\simeq \Sigma_3$. Hence the element $\sigma^{-2} \iota_{2l} \star \iota_{2l} \star \iota_{2l}\in H_{6l-2}(\Sigma^{-2}S^{2l})^{\wedge 3}_{h\Sigma_{T^{1,3}}}$, which maps to $[\iota_{2l},[\iota_{2l},\iota_{2l}]]$ in $H_{6l-2}\mathbb{D}_3(S^{2l})$, must map to the trivial element, since $\Sigma_{T^{1,3}}\simeq \Sigma_2$, and $\star$ denotes the Pontryagin product. This concludes the proof. 
\end{proof}

\section{Main Result}
Before stating our main result, we will need a bit of terminology:
\begin{Def}
If $M_*$ is a graded $\mathbb{F}_p$ vector space with basis $B$, then define $\mathcal{A}(M_*)$ to be the graded vector space with basis 
\begin{align*}
B \cup \bigcup_{k\geq 2}\left\{[a_1,[a_2,[\ldots, [a_{k-1},a_k]\ldots ]]\big|a_i\in B \right\}
\end{align*}
where $|[x,y]|=|x|+|y|-1$. Define the free shifted Lie algebra $\mathbf{s}\mathcal{L}(M_*)$ to be $\mathcal{A}_*$ modulo the relations for all $x,y,z\in \mathcal{A}(M_*)$
\begin{itemize}
\item $[x,y]+[x,z]=[x,y+z]$ (Linearity)
\item $[x,y]=(-1)^{|x||y|}[y,x]$ (Graded Commutativity)
\item $(-1)^{|x||z|}[x,[y,z]] + (-1)^{|y||z|}[y,[z,x]]+(-1)^{|z||y|}[z,[x,y]]=0$ (The Graded Jacobi Identity)
\item For $p=3$ $[x,[x,x]]=0$
\end{itemize}

We define the free shifted Lie algebra with Power operations, $\mathbf{s}\mathcal{L}_{\mathcal{P}}(M_*)$ to be
\begin{align*}
\bigcup_{k\geq 0}\left\{\overline{\beta^{\epsilon_1}Q^{s_1}}\cdots \overline{ \beta^{\epsilon_k}Q^{s_k}}x \big|x\in\mathbf{s}\mathcal{L}(M_*), \  s_k\geq \frac{|x|}{2},\  s_i>ps_{i+1}-\epsilon_{i+1} \forall i \right\}
\end{align*}
subject to linearity of the power operations.
\end{Def}
Clearly given any spectrum $X$ we have a map $\mathbf{s}\mathcal{L}_{\mathcal{P}}(H_*(X))\to H_*\mathbb{P}(X)$.

We are now ready to state our main theorem:
\begin{Theo}\label{Main}
If $X$ is a spectra, then the map $\mathbf{s}\mathcal{L}_{\mathcal{P}}(H_*(X))\to H_*\mathbb{P}( X)$ is an isomorphism of $\mathbb{F}_p$ vector spaces.
\end{Theo}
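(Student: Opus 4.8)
The plan is to reduce the general case to the already-established case of spheres by a standard cell-induction / skeletal-filtration argument, using the fact that every spectrum is a filtered colimit of finite spectra built from cells $S^n$, and that both the source and target of the map are compatible with filtered colimits and with cofiber sequences in an appropriate sense. First I would fix a CW structure on $X$ and filter $X$ by its skeleta $X^{(k)}$, so that each $X^{(k)}/X^{(k-1)} \simeq \bigvee_i S^{n_i}$; since homology, the functors $\mathbb{D}_n(-)$, and the algebraic functor $\mathbf{s}\mathcal{L}_{\mathcal{P}}(-)$ all commute with filtered colimits (the latter because it is defined basis-wise), it suffices to treat finite $X$, and then by induction on the number of cells it suffices to understand how both sides behave under a cofiber sequence $S^{n-1}\to X' \to X$. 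The corollary in Section 3 (basis for $H_*\mathbb{D}_m(S^n)$ for all $n\in\mathbb{Z}$, via Brantner's Lemma \ref{Bran}) together with Proposition \ref{AMP=LieP} gives the result when $X$ is a single sphere in any degree, which anchors the induction.

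The key structural input is a \emph{Hilton--Milnor-type} decomposition of $\mathbb{P}(A\vee B)$: because $\mathbb{P}$ is the free $\partial_*$-algebra functor, $\mathbb{D}_n(A\vee B)$ splits (naturally, up to filtration) as a wedge of terms $\bigl(\partial_n\wedge A^{\wedge a}\wedge B^{\wedge b}\bigr)_{h(\Sigma_a\times\Sigma_b)}$ with $a+b=n$, and this is precisely mirrored on the algebraic side by the way $\mathbf{s}\mathcal{L}_{\mathcal{P}}$ of a direct sum decomposes into iterated brackets and power operations of basis elements coming from $A$ and from $B$. So the second step is: establish that the map $\mathbf{s}\mathcal{L}_{\mathcal{P}}(H_*(X))\to H_*\mathbb{P}(X)$ is a map of filtered vector spaces with respect to the wedge/weight filtration, and that on associated graded it is built out of the maps $\mathbf{s}\mathcal{L}_{\mathcal{P}}(H_*(S^{n_i}))\to H_*\mathbb{P}(S^{n_i})$ smashed together, together with the transfer/orbit data computed by the spectral sequence of Lemma \ref{SS}. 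Here the homology operations (Lie bracket, power operations, and the relations among them: graded commutativity, Jacobi from Proposition \ref{LieBrac}, the $p=3$ identity from Corollary \ref{[x,[x,x]]}, and the vanishing $[x,\overline{\beta^\epsilon Q^k}y]=0$) guarantee that the algebraic side $\mathbf{s}\mathcal{L}_{\mathcal{P}}(H_*(X))$ is not \emph{too big}: every element of $H_*\mathbb{P}(X)$ can be rewritten in the normal form dictated by the definition of $\mathbf{s}\mathcal{L}_{\mathcal{P}}$, so the map is surjective. Injectivity then follows by a dimension count on each associated-graded piece against the sphere computations, i.e. the two graded vector spaces have the same Poincar\'e series summand-by-summand.

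More concretely, the inductive step runs as follows. Given the cofiber sequence $S^{n-1}\xrightarrow{f} X'\to X$, I would filter $\mathbb{P}(X)$ so that $\mathbb{P}(X)$ is assembled from $\mathbb{P}(X')$ by attaching, in each weight $n$, a copy of $\bigl(\partial_n\wedge (S^{n-1})^{\wedge j}\wedge (X')^{\wedge n-j}\bigr)_{h(\Sigma_j\times\Sigma_{n-j})}$-type cells; by Lemma \ref{SS} and Lemma \ref{d_1=Tr} the homology of each such term is understood via transfers, and by the sphere corollary the contribution of the $(S^{n-1})^{\wedge j}$ part is exactly $\mathbf{s}\mathcal{L}_{\mathcal{P}}$ applied to a one-dimensional space in degree $n-1$. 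Matching this term-by-term with the algebraic decomposition $\mathbf{s}\mathcal{L}_{\mathcal{P}}(H_*X')\oplus (\text{new brackets and operations involving the class of }S^{n-1})$ gives the inductive step. The main obstacle I expect is \textbf{bookkeeping the filtration and proving that the splitting of $H_*\mathbb{D}_n(A\vee B)$ is genuinely realized compatibly with the $\partial_*$-algebra structure maps} — i.e.\ checking that no hidden extension or differential in the spectral sequence of Lemma \ref{SS} mixes the weight-graded pieces, and that the brackets/power operations hitting "cross terms" land exactly where $\mathbf{s}\mathcal{L}_{\mathcal{P}}$ predicts. Once the associated graded is pinned down by the sphere case and the relations of Section 2, the comparison of Poincar\'e series forces the map to be an isomorphism, completing the induction and hence the proof for all spectra $X$.
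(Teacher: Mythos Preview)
Your overall architecture---reduce to spheres, then to wedges of spheres, then to arbitrary spectra---matches the paper's, and your Hilton--Milnor decomposition of $\mathbb{D}_n(A\vee B)$ into pieces $(\partial_n\wedge A^{\wedge a}\wedge B^{\wedge b})_{h(\Sigma_a\times\Sigma_b)}$ is exactly what the paper imports from Arone--Kankaanrinta \cite{ArKa98}. Where you diverge is the passage from wedges of spheres to a general $X$: you propose a cell-by-cell induction along cofiber sequences $S^{n-1}\to X'\to X$, together with a filtration on $\mathbb{P}(X)$ counting occurrences of the new cell.

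This is where you are working much harder than necessary, and where there is a real gap. The paper's first step is a single lemma (established by Antolin-Camarena for the free algebra over any operad in spectra): $H_*\mathbb{P}(X)$ depends, functorially, only on $H_*(X)$. Concretely, $H\mathbb{F}_p\wedge X$ splits as an $H\mathbb{F}_p$-module spectrum into a wedge of suspensions of $H\mathbb{F}_p$, so a choice of basis $\{x_i\}$ for $H_*(X)$ yields a homology isomorphism $\bigvee_i S^{|x_i|}\to H\mathbb{F}_p\wedge X$ and hence an isomorphism $H_*\mathbb{P}(\bigvee_i S^{|x_i|})\cong H_*\mathbb{P}(X)$. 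That reduces the general case to the wedge-of-spheres case in one stroke---no cofiber sequences, no cell filtration, no spectral sequence.

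Your cell induction, by contrast, needs precisely the degeneration you flag as the ``main obstacle'': that no differential in the filtration spectral sequence for $\mathbb{D}_n(X)$ mixes the weight-graded pieces. But the cleanest way to see that degeneration is that, after smashing with $H\mathbb{F}_p$, the attaching maps become null---which is the homology-invariance lemma again. So the shortcut you are missing is not merely a simplification; it is exactly the content that would fill your gap. (A small slip as well: in the cofiber $S^{n-1}\to X'\to X$ the filtration quotients involve the new cell $S^n\simeq X/X'$, not $S^{n-1}$.)
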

The proof follows from the results above and arguments completely analogous to the proof of Theorem 7.1 in \cite{Cam15}, which I will summarize here.
\begin{proof}
Step one is showing that the homology of $\mathbb{P}(X)$ depends only on the homology of $X$. In fact Antolin-Camarena shows that this is true for any operad in spectra. From here we need to prove the theorem in increasing generality. The first case is taken care of by noting that we have already proved the theorem in the case where $X$ is a sphere. From here we can use a result of Arone and Kankaanrinta \cite[Thm. 0.1]{ArKa98} to extend the result to when $X$ is a finite wedge of spheres, and then we note that this also gives us the case for arbitrary wedges of spheres, writing it as a filtered colimit of finite wedges of spheres, and recall that homology and free constructions commute with filtered colimits. The last case is for a general $X$. We simply pick a basis for $H_*(X)$, $\{x_i\}$, and then use the following homology isomorphism:
\begin{align*}
H_*(\bigvee_i S^{|x_i|}) \to H_*(X)
\end{align*} 
given by the sum of the $x_i$'s and that gives the full result.
\end{proof}

One could have hoped for a description of the relations the power operations satisfy, as was done for the $p=2$ case in \cite[Thm. 1.5.1]{Beh12}. The argument there relies on a good understanding of the homology of the James-Hopf map (see \cite{Kuh83}), which due to combinatorics appears harder to obtain for odd primes.
\begin{Con} \label{MixAd}
The Lie power operations satisfy the mixed Adem relations, see \cite{CLM76} II.3 for a statement of these.
\end{Con}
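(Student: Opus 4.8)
The plan is to reduce the conjecture to a universal computation on spheres and then to import the relations from the classical Dyer--Lashof setting via the James--Hopf map. First I would observe that, because the power operations are natural and Theorem \ref{Main} identifies $H_*\mathbb{P}(X)$ with the free object $\mathbf{s}\mathcal{L}_{\mathcal{P}}(H_*(X))$ — whose only imposed relations beyond the shifted Lie axioms are linearity — any purported relation among two-fold composites $\overline{\beta^{\epsilon_1}Q^{s_1}}\,\overline{\beta^{\epsilon_2}Q^{s_2}}$ is forced by its value on the universal generator. Thus it suffices to verify each mixed Adem relation of \cite{CLM76} II.3 on the fundamental class $\iota\in H_*(S^{2l+1})$, ranging over all $l$ (the even case then following from Lemma \ref{Bran} and Proposition \ref{BehEHP}). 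Such a relation lives in $H_*\mathbb{D}_{p^2}(S^{2l+1})$, for which Theorem \ref{AMbasis} supplies an explicit basis indexed by the completely unadmissable sequences. The content of the conjecture is therefore the assertion that the expansion of an \emph{inadmissable} composite (one with $s_1\le ps_2-\epsilon_2$) into the completely unadmissable basis carries exactly the coefficients predicted by the mixed Adem relations; note that the direction of the Arone--Mahowald admissibility condition is reversed from the classical one, so these are genuinely the \emph{shifted} relations.

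The second step is to locate the ambient computation precisely. By Proposition \ref{AMP=LieP} the composite equals the Arone--Mahowald class $[\beta^{\epsilon_1}Q^{s_1}\wr\beta^{\epsilon_2}Q^{s_2}\iota]$, the image of the wreath class under the surjection $\Sigma^{-2}H_*\big((S^{2l+1})^{\wedge p^2}_{h\Sigma_p\wr\Sigma_p}\big)\twoheadrightarrow H_*\mathbb{D}_{p^2}(S^{2l+1})$ arising from the simplicial filtration. Consequently the relations satisfied by the Lie power operations are precisely the image of the classical Dyer--Lashof Adem relations, which hold already in $H_*\big((S^{2l+1})^{\wedge p^2}_{h\Sigma_p\wr\Sigma_p}\big)$, after desuspending by the two top cells $T_p$. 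I would make this identification by comparing the wreath homotopy orbits above with the $E_\infty$ extended power $H_*\big((S^{2l+1})^{\wedge p^2}_{h\Sigma_{p^2}}\big)$ along the inclusion $\Sigma_p\wr\Sigma_p\hookrightarrow\Sigma_{p^2}$, since it is in the latter that the relations of \cite{CLM76} are codified in terms of the invariant-theoretic structure of $H_*B\Sigma_{p^2}$.

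The key geometric input — the one Behrens exploited at $p=2$ — is the James--Hopf map, which relates the spectral Lie layer $\mathbb{D}_{p^2}$ to the $E_\infty$ extended power $D_{p^2}$ and thereby transports the known Adem relation in $H_*D_{p^2}(S^n)$ to a relation among the $T_p$-carried classes. I would therefore compute the effect on $H_*$ of the odd-primary James--Hopf map on the relevant extended powers, track how the admissible expansion in $H_*D_{p^2}$ pulls back through the simplicial-filtration surjection, and read off the resulting coefficients. As an independent consistency check one can run the spectral sequence of Lemma \ref{SS} for $\partial_{p^2}$ directly: the cells carrying the two-fold composites are the grafted $T_p\circ T_p$ cells, and the rewriting of an inadmissable monomial is governed by the transfer $d_1$-differentials of Lemma \ref{d_1=Tr} together with the higher differentials, exactly as in the $p=2$ analysis.

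The hard part will be precisely the odd-primary James--Hopf computation flagged after the conjecture. At $p=2$ Kuhn's determination \cite{Kuh83} of the homology of the James--Hopf map is tractable because the governing combinatorics — binomial coefficients modulo $2$ and the Dickson-invariant description of $H_*B\Sigma_{2^k}$ — is comparatively simple. At odd primes the Bockstein doubles the number of operations, the Nishida and Adem relations involve considerably more intricate binomial-coefficient sums, and the passage between $H_*(\Sigma_p\wr\Sigma_p)$ and $H_*\Sigma_{p^2}$ is combinatorially heavier. The decisive and still-open step is to match, coefficient by coefficient, the expansion of an inadmissable composite into the completely unadmissable basis with the Bockstein-twisted mixed Adem coefficients of \cite{CLM76} II.3; it is this coefficient matching, rather than any formal obstruction, on which the conjecture turns.
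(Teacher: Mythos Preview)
This statement is a \emph{conjecture}, and the paper does not contain a proof of it. There is therefore no ``paper's own proof'' to compare your proposal against; what the paper offers is the paragraph preceding the conjecture together with Remark~\ref{RemMixAd}, which sketch the intended strategy and name the obstruction.

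Your reduction to the universal case on odd spheres, your use of the surjection $\Sigma^{-2}H_*\big((S^{2l+1})^{\wedge p^2}_{h\Sigma_p\wr\Sigma_p}\big)\twoheadrightarrow H_*\mathbb{D}_{p^2}(S^{2l+1})$ from the simplicial filtration, and your identification of the odd-primary James--Hopf computation as the decisive missing ingredient all match the paper's own discussion. Remark~\ref{RemMixAd} sharpens your framing somewhat: rather than the James--Hopf map as such, it isolates as a sufficient condition the purely group-homological claim that the transfer $H_*(\Sigma_{p^2})\to H_*(\Sigma_p\wr\Sigma_p)$ sends $\beta^\epsilon Q^iQ^j$ to itself plus the mixed Adem terms, combined with the observation that the kernel of the surjection above is exactly the image of this transfer. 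This is closely related to, and at $p=2$ extracted from, the James--Hopf analysis you invoke, but it gives a more concrete algebraic target than your formulation via pulling back expansions along the filtration.

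One small caveat: you invoke Proposition~\ref{AMP=LieP} to identify $\overline{\beta^{\epsilon_1}Q^{s_1}}\,\overline{\beta^{\epsilon_2}Q^{s_2}}\iota$ with the Arone--Mahowald class $[\beta^{\epsilon_1}Q^{s_1}\wr\beta^{\epsilon_2}Q^{s_2}\iota]$ for an \emph{inadmissable} composite, whereas that proposition is stated and proved only for completely unadmissable sequences. The diagram chase in its proof does go through verbatim for arbitrary sequences, so this is only a cosmetic gap, but you should note the extension explicitly.

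In short, your proposal is not a proof but an accurate restatement of the known strategy and its known obstruction, aligned with the paper's own remarks; the conjecture remains open.
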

\begin{Rem} \label{RemMixAd}
Note that to prove Conjecture \ref{MixAd} for positive homology classes, it will suffice to prove the following: 

The transfer $H_*(\Sigma_{p^2})\to H_*(\Sigma_p\wr \Sigma_p)$ is given by 
\begin{align*}
\beta^{\epsilon}Q^iQ^j\mapsto \beta^\epsilon Q^iQ^j+ \text{Mixed Adem Relations}.
\end{align*}

The conjecture then follows from the fact that the kernel of the surjection $\Sigma^{-2}H_*(S^{2l+1})^{p^2}_{h\Sigma_p \wr \Sigma_p} \to H_*\mathbb{D}_{p^2}(S^{2l+1})$ from \cite[Thm. 3.16]{AM99} is given by the image of the transfer $H_*(S^{2l+1})^{p^2}_{h\Sigma_{p^2}} \to H_*(S^{2l+1})^{p^2}_{h\Sigma_p \wr \Sigma_p}$.
\end{Rem}
Note that Conjecture \ref{MixAd} and Remark \ref{RemMixAd} hold for $p=2$.
\bibliography{bibleo}
\bibliographystyle{amsalpha}
\end{document}